\documentclass[11pt]{article}
\usepackage{amsmath}
\usepackage{amssymb}
\usepackage{amsthm}
\usepackage[usenames]{color}
\usepackage{amscd}
\usepackage{dsfont}
\usepackage{indentfirst}

\usepackage[colorlinks=true,linkcolor=blue,filecolor=red,
citecolor=webgreen]{hyperref}
\definecolor{webgreen}{rgb}{0,.5,0}

\hoffset=-.7truein \voffset=-.6truein \textwidth=160mm
\textheight=210mm

\def\C{{\mathds{C}}}

\def\N{{\mathds{N}}}
\def\Z{{\mathds{Z}}}

\def\1{{\bf 1}}

\def\id{\operatorname{id}}

\newtheorem{theorem}{Theorem}[section]

\newtheorem{lemma}[theorem]{Lemma}
\newtheorem{cor}[theorem]{Corollary}

\begin{document}

\title{{\bf Estimates for $k$-dimensional spherical summations of arithmetic functions of the GCD and LCM}}
\author{Randell Heyman and L\'aszl\'o T\'oth}
\date{}
\maketitle

\centerline{H. Maier et al. (eds.) Number Theory in Memory of Eduard Wirsing}

\centerline{Springer, Cham, 2023, pp. 157--183}

\centerline{\url{https://doi.org/10.1007/978-3-031-31617-3_11}}

\begin{abstract} Let $k\ge 2$ be a fixed integer. We consider sums of type $\sum_{n_1^2+\cdots+ n_k^2\le x} F(n_1,\ldots,n_k)$,
taken over the $k$-dimensional spherical region $\{(n_1,\ldots,n_k)\in \Z^k: n_1^2+\cdots+ n_k^2\le x\}$, where $F:\Z^k\to \C$ is a 
given function. In particular, we deduce asymptotic formulas with remainder terms for the spherical
summations $\sum_{n_1^2+\cdots+ n_k^2\le x} f((n_1,\ldots,n_k))$ and $\sum_{n_1^2+\cdots+ n_k^2\le x} f([n_1,\ldots,n_k])$,
involving the GCD and LCM of the integers $n_1,\ldots,n_k$, where $f:\N\to \C$ belongs to certain classes of functions.
\end{abstract}

{\sl 2010 Mathematics Subject Classification}: 11A05, 11A25, 11N37

{\sl Key Words and Phrases}: arithmetic function, greatest common divisor, least common multiple, number of integer lattice 
points in a sphere, spherical summation, Wintner's mean value theorem, asymptotic formula

\tableofcontents

\section{Introduction}

For a function $F:\Z^k\to \C$ of $k$ ($k\ge 2$) variables consider the summation
\begin{equation} \label{sum_F}
\sum_{\substack{n_1,\ldots,n_k\in \Z\\ n_1^2+\cdots +n_k^2 \le x}} F(n_1,\ldots,n_k)
\end{equation}
over the lattice points in the $k$-dimensional sphere of radius $\sqrt{x}$.

If $F$ is the constant $1$ function, then \eqref{sum_F} represents the number of integer lattice points in the $k$-dimensional sphere of 
radius $\sqrt{x}$, and it has been extensively studied in the literature. See Section \ref{Sect_Known_estimates}. If $F(n_1,\ldots, n_k)=1$ for 
$(n_1,\ldots,n_k)=1$ and $0$ otherwise, then \eqref{sum_F} gives the number of primitive lattice points, much studied in the cases 
$k=2$ (primitive circle problem) and $k=3$. See, e.g., Chamizo et al. \cite{CCU2007}, Wu \cite{Wu2002}. 

There are in the literature only few asymptotic results concerning the sums \eqref{sum_F} for general classes of functions or for other 
special functions $F$. General results concerning classes of (multiplicative) multivariable functions $F$, with summation over 
$n_1,\ldots,n_k \in \N$  with $n_1,\ldots,n_k\le x$ have been given by Essouabri et al. \cite{EST2022}, de~la~Bret\`{e}che \cite{Bre2001}, 
Ushiroya \cite{Ush2012}. Also see the survey paper by the second author \cite{Tot2014}. Results for sums of type \eqref{sum_F}, 
more generally for sums over $n_1,\ldots,n_k\in \N$ with the H\"older norm 
$(n_1^s+\cdots +n_k^s)^{1/s}\le x$, where $s\ge 1$ is a real number, were considered in \cite{EST2022}. 
For example, as a consequence of certain more general results, it was proved in \cite[Cor.\ 5]{EST2022} by analytic methods that for every
$k\ge 2$,
\begin{equation*}
\sum_{\substack{n_1,\ldots,n_k\in \N\\ (n_1^s+\cdots +n_k^s)^{1/s} \le x}} c_k(n_1,\ldots,n_k) = x^k Q(\log x) + O(x^{k-\beta}),
\end{equation*}
where $c_k(n_1,\ldots,n_k)$ denotes the number of cyclic subgroups of the group $\Z_{n_1}\times \cdots \times \Z_{n_k}$, $Q(t)$ is a 
polynomial in $t$ of degree $2^k-1$ and $\beta$ is a positive real number.

Another similar recent result is, in the case $k=2$,
\begin{equation*} 
\sum_{\substack{n_1,n_2\in \N\\ n_1^2+n_2^2 \le x}} s(n_1,n_2) = x P(\log x) + O(x^{17/22+\varepsilon}),
\end{equation*}
where $s(n_1,n_2)$ stands for the total number of subgroups of the group $\Z_{n_1}\times \Z_{n_2}$ and $P(t)$ is a 
polynomial in $t$ of degree three. See Sui and Liu \cite{SL2021}.

In this paper we first deduce a $k$-dimensional generalization of Wintner's mean value theorem for the spherical summation \eqref{sum_F}. 
Then we consider the $k$-dimensional spherical summations of arithmetic functions $f$ 
of the GCD and LCM, given by
\begin{equation} \label{sum_GCD} 
\sum_{\substack{n_1,\ldots,n_k\in \Z\\ n_1^2+\cdots +n_k^2 \le x}} 
f((n_1,\ldots,n_k)) 
\end{equation}
and
\begin{equation} \label{sum_LCM}
\sum_{\substack{n_1,\ldots,n_k\in \Z\\ n_1^2+\cdots +n_k^2 \le x}} 
f([n_1,\ldots,n_k]).
\end{equation} 

For every function $f:\N \cup \{0\} \to \C$ with $f(0)=0$ one has 
\begin{equation} \label{sum_GCD_formula}
\sum_{\substack{n_1,\ldots,n_k\in \Z\\ n_1^2+\cdots +n_k^2 \le x}} 
f((n_1,\ldots,n_k)) 
= \sum_{\substack{d,\delta\in \N\\ d^2\delta \le x}} (\mu*f)(d) r_k(\delta),
\end{equation}
where $\mu$ is the M\"obius function, $*$ is the convolution of arithmetic functions, and $r_k(n)$ denotes
the number of $k$-tuples $(n_1,\ldots,n_k) \in \Z^k$ such that $n_1^2+\cdots + n_k^2=n$. See Lemma \ref{Lemma_gcd}.
Therefore, estimates involving the GCD of integers, are consequences of known estimates on the sums 
$\sum_{n\le x} r_k(n)$. There is no simple formula, similar to \eqref{sum_GCD_formula}, concerning the LCM of integers, 
and to establish asymptotic formulas for the sums \eqref{sum_LCM} is more difficult. 

We present asymptotic formulas for the sums \eqref{sum_GCD} and \eqref{sum_LCM}  
for large classes of functions $f$, including the functions $f=\id, \tau, \sigma, \varphi,\omega$, $\Omega$, and  
$f=\mu^2, \id, \sigma, \varphi$, respectively. Some of our results can be extended to H\"older norms 
$(n_1^s+\cdots +n_k^s)^{1/s}$, with $s\ge 1$, but we confine ourselves with the case $s=2$. All our proofs are elementary and 
different from the proofs of the above mentioned papers. 

For some other results on sums of arithmetic functions of the GCD and LCM see, e.g., Bordell\`es and T\'oth \cite{BorTot2022}, 
the authors \cite{HeyTot2021, HeyTot2022}, Hilberdink and T\'oth  \cite{HilTot2016}, Hilberdink et al. \cite{HLT2020}, 
T\'oth and Zhai \cite{TotZha2018}, and their references.  

Throughout the paper we use the following notation: $\N =\{1,2,\ldots\}$; $(n_1,\ldots,n_k)$ and $[n_1,\ldots,n_k]$ denote the greatest common
divisor (GCD) and least common multiple (LCM) of $n_1,\ldots,n_k\in \Z$; $*$ is the Dirichlet
convolution of arithmetic functions; $\1$, $\delta$ and $\id$ are the functions $\1(n)=1$, $\delta(n)=\lfloor 1/n \rfloor$, $\id(n)=n$ ($n\in \N$); 
$\mu$ denotes the M\"obius function; $\varphi$ is Euler's totient function; $\lambda$ is the Liouville function; 
$\tau(n)$ and $\sigma(n)$ are the number and sum of divisors of $n\in \N$;   
$\omega(n)$ and $\Omega(n)$ stand for the number of prime divisors, respectively prime power divisors of $n\in \N$;
the sums $\sum_p$ and products $\prod_p$ are taken over the primes $p$.

\section{Preliminaries}

\subsection{Known estimates for the number of lattice points in $k$-dimensional spheres} 
\label{Sect_Known_estimates}

For $k\ge 2$ let $r_k(n)$ denote, as mentioned above, the number of $k$-tuples $(n_1,\ldots,n_k) \in \Z^k$ such that $n=n_1^2+\cdots + n_k^2$. 
It is known that
\begin{equation} \label{number_lattice_points_k}
\sum_{\substack{n_1,\ldots,n_k\in \Z\\ n_1^2+\cdots +n_k^2\le x}} 1 = \sum_{0\le n\le x} r_k(n)= 
V_k x^{k/2}+P_k(x),
\end{equation}
where
\begin{equation} \label{V_k}
V_k= \frac{\pi^{k/2}}{\Gamma(k/2+1)}
\end{equation}
is the volume of the $k$-dimensional unit sphere ($\Gamma$ is the Gamma function), more explicitly, 
\begin{equation*}
V_{2m}= \pi^m/m!, \quad V_{2m+1}= 2^{m+1}\pi^m/(2m+1)!! \quad (m\ge 1),
\end{equation*}
with the notation $(2m+1)!!= 1\cdot 3\cdot 5\cdots (2m+1)$, and the best results up to date for the error term $P_k(x)$ are 
$P_2(x)\ll x^{517/1648+\varepsilon}$, where $517/1648 \approx 0.313713$ (J.~Bourgain and N.~Watt, 2017) 
$P_3(x) \ll x^{21/32+\varepsilon}$, where $21/32 = 0.65625$ (D.~R.~Heath-Brown, 1999), 
$P_4(x)\ll x(\log x)^{2/3}$ (A.~Walfisz, 1960), $P_k(x)\ll x^{k/2-1}$ if $k\ge 5$. 

We remark that in the cases $k=2$ and $k=3$ the problem of the best possible error terms is still unsolved (circle and spherical problems). See, e.g., Berndt et al.
\cite{BKZ2018}, Bourgain and Watt \cite{BouWat2017}, Grosswald \cite{Gro1985}, Ivi\'{c} et al. \cite{Ivi_etal2006}, Kr\"atzel \cite[Chs.\ 3 and 4]{Kra1988}, 
Sierpi\'nski \cite[Ch.\ XI]{Sie1988} for more details, 
including methods of proofs (elementary and analytic). 

\subsection{Sums of functions of the GCD and LCM over the integers, respectively natural numbers}

For the GCD and LCM we use the conventions $(0,0)=[0,0]=0$, $(n_1,\ldots,n_{k-1},0) = (n_1,\ldots,n_{k-1})$, $[n_1,\ldots,n_{k-1},0] =0$, and  
$(n_1)= [n_1]= |n_1|$ ($n_1,\ldots, n_{k-1}\in \Z)$. In this way, and by the symmetry of the LCM,
\begin{equation*} 
\sum_{\substack{n_1,\ldots,n_k\in \Z\\ n_1^2+\cdots +n_k^2 \le x}}
f([n_1,\ldots,n_k]) = 2^k \sum_{\substack{n_1,\ldots,n_k\in \N\\ n_1^2+\cdots +n_k^2 \le x}}
f([n_1,\ldots,n_k])
\end{equation*}
for every function $f:\N \cup \{0\}\to \C$,
hence LCM estimates over natural numbers are equivalent to estimates for the integers. 

For the GCD sums,
\begin{equation*} 
\sum_{\substack{n_1,n_2\in \Z\\ n_1^2+n_2^2\le x}}
f((n_1,n_2)) = 4 \sum_{\substack{n_1,n_2\in \N \\ n_1^2+n_2^2 \le x}}
f((n_1,n_2))+ 4 \sum_{\substack{n\in \N\\ n\le \sqrt{x}}} f(n),
\end{equation*}
and
\begin{equation*} 
\sum_{\substack{n_1,n_2\in \N\\ n_1^2+n_2^2\le x}}
f((n_1,n_2)) = \frac1{4} \sum_{\substack{n_1,n_2\in \Z \\ n_1^2+n_2^2 \le x}}
f((n_1,n_2)) -\frac1{2} \sum_{\substack{n\in \Z\\ n^2\le x}} f(|n|).
\end{equation*}

In general, in $k$-dimension ($k\ge 2$) we have
\begin{equation} \label{Z_N} 
\sum_{\substack{n_1,\ldots, n_k\in \Z\\ n_1^2+\cdots+ n_k^2\le x}}
f((n_1,\ldots, n_k)) = \sum_{j=0}^{k-1} \binom{k}{j} 2^{k-j} \sum_{\substack{n_1,\ldots, n_{k-j}\in \N \\ n_1^2+\cdots+ n_{k-j}^2 \le x}}
f((n_1,\ldots, n_{k-j})),
\end{equation}
and conversely, by binomial inversion, 
\begin{equation}  \label{N_Z}
\sum_{\substack{n_1,\ldots, n_k\in \N\\ n_1^2+\cdots+ n_k^2\le x}}
f((n_1,\ldots, n_k)) = \frac1{2^k} \sum_{j=0}^{k-1} (-1)^j \binom{k}{j} \sum_{\substack{n_1,\ldots, n_{k-j}\in \Z \\ n_1^2+\cdots+ n_{k-j}^2 \le x}}
f((n_1,\ldots, n_{k-j})),
\end{equation}
showing that estimates for GCD sums over natural numbers imply estimates for the corresponding sums over 
the integers and vice versa.    

\subsection{Some lemmas}

The identity included in the next lemma was already mentioned in the Introduction. We remark that there is no known similar formula
concerning the LCM of the integers $n_1,\ldots,n_k$.

\begin{lemma} \label{Lemma_gcd} Let $f:\N \cup \{0\}\to \C$ be an arbitrary function with $f(0)=0$. Then for every $n\in \N$,
\begin{equation} \label{id_gcd_sum}
\sum_{\substack{n_1,\ldots,n_k\in \Z\\ n_1^2+\cdots +n_k^2=n}}  f((n_1,\ldots,n_k)) = 
\sum_{d^2\mid n} (\mu*f)(d) r_k(n/d^2).
\end{equation}
\end{lemma}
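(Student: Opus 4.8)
The plan is to prove the identity by classifying the lattice points $(n_1,\ldots,n_k)$ with $n_1^2+\cdots+n_k^2=n$ according to the value of their greatest common divisor $d=(n_1,\ldots,n_k)$. First I would write the left-hand side as $\sum_{d\ge 1} f(d) A_d(n)$, where $A_d(n)$ counts the $k$-tuples on the sphere of radius $\sqrt{n}$ whose GCD is \emph{exactly} $d$. The key observation is that a tuple has GCD exactly $d$ precisely when every $n_i$ is divisible by $d$; writing $n_i=d m_i$, the condition $n_1^2+\cdots+n_k^2=n$ forces $d^2\mid n$ and $m_1^2+\cdots+m_k^2=n/d^2$, while the GCD being exactly $d$ (not a proper multiple) means $(m_1,\ldots,m_k)=1$. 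Here the convention $f(0)=0$ ensures the point $(0,\ldots,0)$ (which arises only when $n=0$) contributes nothing, so the sum is well defined.

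The next step is to pass from counting tuples with GCD \emph{exactly} $d$ to counting tuples with GCD \emph{divisible} by $d$, which is where $r_k$ enters cleanly. Let $B_e(n)$ be the number of tuples on the sphere $n_1^2+\cdots+n_k^2=n$ with $e\mid (n_1,\ldots,n_k)$; substituting $n_i=e m_i$ gives $B_e(n)=r_k(n/e^2)$ when $e^2\mid n$ and $B_e(n)=0$ otherwise. Since $A_d=\sum_{d\mid e} \mu(e/d) B_e$ by Möbius inversion over the divisibility lattice, I would substitute and interchange the order of summation:
\begin{equation*}
\sum_{d\ge 1} f(d) A_d(n) = \sum_{d\ge 1} f(d) \sum_{\substack{e:\, d\mid e}} \mu(e/d)\, r_k(n/e^2),
\end{equation*}
with the understanding that $r_k(n/e^2)=0$ unless $e^2\mid n$.

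Then I would collect terms by the outer divisor $e$: writing $d\mid e$ as $e=dt$, the double sum becomes $\sum_{e\ge 1} r_k(n/e^2)\sum_{d\mid e} f(d)\mu(e/d)$, and the inner sum is exactly the Dirichlet convolution $(\mu*f)(e)$. Since $r_k(n/e^2)$ vanishes unless $e^2\mid n$, this is precisely $\sum_{e^2\mid n}(\mu*f)(e)\, r_k(n/e^2)$, which is the right-hand side after renaming $e$ as $d$. I do not anticipate a genuine obstacle here, as this is a standard GCD-stratification argument; the only points requiring care are the bookkeeping of the conventions at $n=0$ (handled by $f(0)=0$) and the justification for interchanging the two finite sums, both of which are routine since for fixed $n$ all sums are finite (only $e$ with $e^2\mid n$ contribute).
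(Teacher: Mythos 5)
Your proof is correct, but it is organized differently from the paper's. The paper applies M\"obius inversion at the level of the function: using $f=\1*(\mu*f)$ it writes $f((n_1,\ldots,n_k))=\sum_{d\mid (n_1,\ldots,n_k)}(\mu*f)(d)$, swaps the order of summation, and substitutes $n_i=da_i$; the convolution $\mu*f$ is thus present from the first line, exact GCDs are never considered, and the identity drops out in two displays. You instead apply M\"obius inversion at the level of the lattice-point counts: you stratify the sphere by the exact GCD (your counts $A_d$), relate these to the divisibility counts $B_e=r_k(n/e^2)$ by the inversion $A_d=\sum_{d\mid e}\mu(e/d)B_e$, and only at the end reassemble the inner sum $\sum_{d\mid e}f(d)\mu(e/d)$ into $(\mu*f)(e)$. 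The two routes are dual and both entirely finite, so there is no issue of rigor; yours is somewhat longer but makes the primitive-point structure explicit (your $A_d$ with $d=1$ is exactly the primitive lattice point count mentioned in the paper's introduction), while the paper's is the shortest path to the stated formula. Two minor points of wording: your clause ``a tuple has GCD exactly $d$ precisely when every $n_i$ is divisible by $d$'' is, read in isolation, only the condition that $d$ \emph{divides} the GCD --- the remainder of your sentence (requiring $(m_1,\ldots,m_k)=1$) supplies the missing half, so no harm is done; and the role you assign to $f(0)=0$ is vacuous inside the lemma, since $n\ge 1$ excludes the origin --- that hypothesis only becomes relevant when the identity is summed over the ball $n_1^2+\cdots+n_k^2\le x$, which does contain the point $n=0$.
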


We will apply Euler’s summation formula, formulated as the next lemma. See, e.g., \cite[Th.\ 3.1]{Apo1976}. 

\begin{lemma} \label{Lemma_Euler_sum}
If the function $\psi$ has a continuous derivative $\psi'$
on the interval $[y, x]$, where $0 < y < x$ are real numbers, then
\begin{equation*}
\sum_{y<n\le x} \psi(n) = \int_y^x \psi(t)\, dt + \int_y^x \{t\} \psi'(t)\, dt -\{x\} \psi(x) + \{y\} \psi(y),  
\end{equation*}
$\{x\}=x-\lfloor x \rfloor$ denoting the fractional part of $x$.
\end{lemma}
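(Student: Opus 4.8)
The plan is to reduce the identity to a single integration by parts on one unit interval and then to assemble the contributions by telescoping. The analytic content sits entirely in the following observation. On any subinterval $[c,d]\subseteq[n,n+1]$ with $n\in\Z$ one has $\{t\}=t-n$ for $t\in[n,n+1)$, so there $\{t\}$ is differentiable with derivative $1$, and since $\psi'$ is continuous on $[y,x]$, integration by parts gives
\[
\int_c^d \{t\}\,\psi'(t)\,dt = \bigl[(t-n)\psi(t)\bigr]_c^d - \int_c^d \psi(t)\,dt = (d-n)\psi(d)-(c-n)\psi(c)-\int_c^d \psi(t)\,dt.
\]
Everything after this is bookkeeping.

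Next I would tile the interval $(y,x]$ by unit cells. Writing $N_1$ for the least integer exceeding $y$ and $N_2=\lfloor x\rfloor$ for the greatest integer not exceeding $x$, I split
\[
\int_y^x \{t\}\,\psi'(t)\,dt = \int_y^{N_1}\{t\}\,\psi'(t)\,dt + \sum_{n=N_1}^{N_2-1}\int_n^{n+1}\{t\}\,\psi'(t)\,dt + \int_{N_2}^x \{t\}\,\psi'(t)\,dt
\]
and apply the displayed identity to each piece. On an interior cell $(d-n)=1$ and $(c-n)=0$, so $\int_n^{n+1}\{t\}\psi'(t)\,dt=\psi(n+1)-\int_n^{n+1}\psi(t)\,dt$; summing over $N_1\le n\le N_2-1$ contributes $\psi(N_1+1)+\cdots+\psi(N_2)$ and $-\int_{N_1}^{N_2}\psi(t)\,dt$. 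On the first end piece the factors are $(d-n)=1$ and $(c-n)=\{y\}$, which supply the remaining value $\psi(N_1)$ together with $-\{y\}\psi(y)$ and $-\int_y^{N_1}\psi(t)\,dt$; on the last end piece they are $(d-n)=\{x\}$ and $(c-n)=0$, yielding $\{x\}\psi(x)-\int_{N_2}^x\psi(t)\,dt$. Adding the three pieces, the $\psi$-values assemble into $\sum_{y<n\le x}\psi(n)$, the integrals recombine into $\int_y^x\psi(t)\,dt$, and a single rearrangement produces the stated formula.

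The only point demanding genuine care, and hence the main obstacle, is the endpoint bookkeeping. At each cell boundary $t=n+1$ the factor $t-n$ tends to $1$ whereas $\{t\}$ itself drops back to $0$, so one must carry the value $t-n$ rather than $\{t\}$ in the boundary term; the degenerate cases in which $y$ or $x$ is an integer are then handled automatically, since the vanishing fractional part removes the corresponding boundary term and the identity still closes up. A more compact, equivalent route avoids this case analysis through the Riemann--Stieltjes integral: one writes $\sum_{y<n\le x}\psi(n)=\int_y^x\psi(t)\,d\lfloor t\rfloor$, substitutes $\lfloor t\rfloor=t-\{t\}$, and integrates by parts once to obtain $\int_y^x\psi(t)\,dt-\bigl[\psi(t)\{t\}\bigr]_y^x+\int_y^x\{t\}\psi'(t)\,dt$, which is exactly the right-hand side. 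I would present the elementary telescoping argument and mention the Stieltjes packaging only as a remark, since the later sections use nothing beyond the displayed identity.
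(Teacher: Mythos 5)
Your proof is correct, and it is essentially the canonical argument: the paper itself gives no proof of this lemma, citing it as known (Apostol, Theorem 3.1), and both your cell-by-cell integration by parts with telescoping and your Riemann--Stieltjes remark match the standard treatments found there. The only point worth a one-line patch is the degenerate case $\lfloor y\rfloor=\lfloor x\rfloor$, where $(y,x]$ contains no integer and your three-piece decomposition collapses; your key identity applied directly on $[y,x]\subseteq[m,m+1]$ then yields the formula with an empty sum, so the argument closes without further work.
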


We need the following results concerning the lattice points in $k$-dimensional ellipsoids. 

\begin{lemma} \label{Lemma_number_k} Let $k\ge 1$ be fixed and let $x, a_1,\ldots,a_k>0$ be real numbers. Then
\begin{equation} \label{number_estimate_k}
\sum_{\substack{n_1,\ldots,n_k\in \N \\ a_1n_1^2+\cdots +a_kn_k^2 \le x}} 1 
= \frac{V_k x^{k/2}}{2^k \sqrt{a_1\cdots a_k}} + O\left(\frac{x^{(k-1)/2}(\sqrt{a_1}+\cdots+\sqrt{a_k})}{\sqrt{a_1\cdots a_k}}\right),
\end{equation}
uniformly in $x, a_1,\ldots,a_k$, where $V_1=1$ and $V_k$ \textup{($k\ge 2$)} are given by \eqref{V_k}.
\end{lemma}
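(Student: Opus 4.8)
The plan is to argue by induction on the dimension $k$, in each step summing over the last variable $n_k$ and using Euler's summation formula (Lemma~\ref{Lemma_Euler_sum}) to replace the resulting one-dimensional sum by an integral. Write $N_k$ for the left-hand side of \eqref{number_estimate_k}. The case $k=1$ is the elementary count $N_1=\lfloor (x/a_1)^{1/2}\rfloor=(x/a_1)^{1/2}+O(1)$, and the error $O(1)$ already has the required shape $x^{0}\sqrt{a_1}/\sqrt{a_1}$, so this starts the induction and fixes the one-dimensional main term.

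For the inductive step I would fix $n_k$ and write
\[
N_k=\sum_{\substack{n_k\in\N\\ a_kn_k^2\le x}}N_{k-1}(x-a_kn_k^2),
\]
suppressing the parameters $a_1,\dots,a_{k-1}$, and substitute the induction hypothesis for each inner count with $y=x-a_kn_k^2$. This produces a main sum $\frac{V_{k-1}}{2^{k-1}\sqrt{a_1\cdots a_{k-1}}}\sum_{n_k}(x-a_kn_k^2)^{(k-1)/2}$ together with an error sum $O\bigl(\frac{\sqrt{a_1}+\cdots+\sqrt{a_{k-1}}}{\sqrt{a_1\cdots a_{k-1}}}\sum_{n_k}(x-a_kn_k^2)^{(k-2)/2}\bigr)$.

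The error sum is handled by monotonicity alone: since $t\mapsto(x-a_kt^2)^{(k-2)/2}$ is nonnegative and nonincreasing on $[0,(x/a_k)^{1/2}]$, the sum is at most $\int_0^{(x/a_k)^{1/2}}(x-a_kt^2)^{(k-2)/2}\,dt$, which after $t=(x/a_k)^{1/2}u$ equals a finite ($k$-dependent) constant times $x^{(k-1)/2}/\sqrt{a_k}$; this yields all the target error terms $x^{(k-1)/2}\sqrt{a_i}/\sqrt{a_1\cdots a_k}$ for $i<k$. For the main sum I would apply Lemma~\ref{Lemma_Euler_sum} to $g(t)=(x-a_kt^2)^{(k-1)/2}$, which vanishes at the upper endpoint. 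The integral $\int_0^{(x/a_k)^{1/2}}g$ evaluates, via the same substitution and the Beta integral $\int_0^1(1-u^2)^{(k-1)/2}\,du=\tfrac{\sqrt\pi}{2}\,\Gamma(\tfrac{k+1}2)/\Gamma(\tfrac k2+1)$, to $\tfrac{\sqrt\pi}{2}\,\frac{\Gamma((k+1)/2)}{\Gamma(k/2+1)}\,x^{k/2}/\sqrt{a_k}$, while the Euler remainder $\int_0^{(x/a_k)^{1/2}}\{t\}\,g'(t)\,dt$ is at most the total variation $g(0)=x^{(k-1)/2}$ of the monotone $g$. Multiplying by the prefactor, the main sum becomes the asserted main term plus $O(x^{(k-1)/2}/\sqrt{a_1\cdots a_{k-1}})=O(x^{(k-1)/2}\sqrt{a_k}/\sqrt{a_1\cdots a_k})$, which supplies the remaining $i=k$ error term.

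To pin down the constant I would use $\Gamma(\tfrac{k-1}2+1)=\Gamma(\tfrac{k+1}2)$, whence $\sqrt\pi\,\frac{\Gamma((k+1)/2)}{\Gamma(k/2+1)}V_{k-1}=\frac{\pi^{k/2}}{\Gamma(k/2+1)}=V_k$; thus the recursion reproduces exactly the constant \eqref{V_k} for every $k\ge2$, and the induction closes. The one point demanding genuine care, and the main obstacle, is \emph{uniformity}: every implied constant must depend on $k$ only. This is precisely why I bound the error sum by a clean monotone integral instead of re-expanding it asymptotically, and control the Euler remainder by the total variation of $g$ rather than by its derivative (which is unbounded near the endpoint). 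One should also check that degenerate ranges cause no trouble: if $x<a_k$ then the sum over $n_k$ is empty and $N_k=0$, while the main term is $O(x^{(k-1)/2}\sqrt{a_k}/\sqrt{a_1\cdots a_k})$ because $x^{1/2}<\sqrt{a_k}$, so it is absorbed into the stated error, and for small inner arguments $y$ the same remark applied at level $k-1$ keeps the induction hypothesis a valid two-sided identity.
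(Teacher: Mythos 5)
Your proposal is correct and follows essentially the same route as the paper's proof: induction on $k$, peeling off the last variable, bounding the inherited error sum trivially, applying Euler's summation formula (Lemma~\ref{Lemma_Euler_sum}) to $t\mapsto (x-a_kt^2)^{(k-1)/2}$, and closing the constant recursion via the Wallis/Beta integral identity $V_{k-1}\int_0^{\pi/2}(\cos t)^k\,dt=V_k/2$. The differences are only cosmetic --- your monotone-comparison bound for the error sum, the Gamma-function form of the constant, and your (welcome) explicit care about the endpoint singularity of the derivative and the degenerate range $x<a_k$ --- and note that, like the paper's own induction, your argument in effect uses $V_1=2$ (the value given by \eqref{V_k}), not the $V_1=1$ stated in the lemma.
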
 

If $a_1= \cdots = a_k=1$, then this gives formula \eqref{number_lattice_points_k} with a weaker error term.
However, our proof is simple and elementary. 

\begin{lemma} \label{Lemma_prod_k} Let $k\ge 1$ be fixed and let $x,a_1,\ldots,a_k>0$ be real numbers. 
Then
\begin{equation} \label{product_estimate_k}
\sum_{\substack{n_1,\ldots,n_k\in \N \\ a_1n_1^2+\cdots +a_kn_k^2 \le x}} 
n_1\cdots n_k = \frac{x^k}{2^k k! a_1\cdots a_k} + O\left(\frac{x^{k-1/2}(\sqrt{a_1}+\cdots+\sqrt{a_k})}{a_1\cdots a_k}\right),
\end{equation}
uniformly in $x, a_1,\ldots,a_k$.
\end{lemma}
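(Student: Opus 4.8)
The plan is to argue by induction on $k$, isolating one variable at each step and applying Euler's summation formula (Lemma~\ref{Lemma_Euler_sum}) to it; this mirrors the strategy behind Lemma~\ref{Lemma_number_k}, the only new feature being the extra weight $n_1\cdots n_k$. Write $S_k(x)$, with the dependence on $a_1,\dots,a_k$ suppressed, for the left-hand side of \eqref{product_estimate_k}. For the base case $k=1$ I would simply use $S_1(x)=\sum_{n_1\le \sqrt{x/a_1}} n_1=\tfrac12\lfloor\sqrt{x/a_1}\rfloor(\lfloor\sqrt{x/a_1}\rfloor+1)$ together with $\lfloor\sqrt{x/a_1}\rfloor=\sqrt{x/a_1}+O(1)$, which gives $\frac{x}{2a_1}+O(\sqrt{x}/\sqrt{a_1})$, exactly \eqref{product_estimate_k} for $k=1$.

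For the inductive step ($k\ge2$) I would fix the last summation variable and write
\[
S_k(x)=\sum_{1\le n_k\le \sqrt{x/a_k}} n_k\, S_{k-1}\bigl(x-a_kn_k^2\bigr),
\]
then substitute the induction hypothesis for $S_{k-1}(y)$ with $y=x-a_kn_k^2$. This produces a main sum $\frac{1}{2^{k-1}(k-1)!\,a_1\cdots a_{k-1}}\sum_{n_k} n_k\,(x-a_kn_k^2)^{k-1}$ and an error sum inherited from the remainder term of the hypothesis. The error sum is the easy part: since $k-\tfrac32\ge0$ for $k\ge2$, bounding $n_k\le\sqrt{x/a_k}$ and $(x-a_kn_k^2)^{k-3/2}\le x^{k-3/2}$ over the at most $\sqrt{x/a_k}$ admissible values of $n_k$ gives $\sum_{n_k} n_k(x-a_kn_k^2)^{k-3/2}\ll x^{k-1/2}/a_k$, so this contributes $O\bigl(x^{k-1/2}(\sqrt{a_1}+\cdots+\sqrt{a_{k-1}})/(a_1\cdots a_k)\bigr)$.

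For the main sum I would set $g(t)=t(x-a_kt^2)^{k-1}$, noting $g(0)=g(\sqrt{x/a_k})=0$. Euler's formula, with lower endpoint $\to0^+$ (legitimate since $g(0)=0$) and upper endpoint the integer $N=\lfloor\sqrt{x/a_k}\rfloor$, gives $\sum_{n_k\le N} g(n_k)=\int_0^N g(t)\,dt+\int_0^N\{t\}g'(t)\,dt$. The substitution $u=x-a_kt^2$ evaluates $\int_0^{\sqrt{x/a_k}} g(t)\,dt=\frac{x^k}{2a_kk}$, and multiplying by the prefactor reproduces exactly the claimed main term $\frac{x^k}{2^kk!\,a_1\cdots a_k}$. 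The correction term is controlled by the total variation, $\int_0^N|g'(t)|\,dt\le 2\max g\ll x^{k-1/2}/\sqrt{a_k}$ (using $g(t)\le\sqrt{x/a_k}\,x^{k-1}$), and the short tail $\int_N^{\sqrt{x/a_k}} g$ is of the same order; after the prefactor these contribute $O\bigl(x^{k-1/2}\sqrt{a_k}/(a_1\cdots a_k)\bigr)$. Adding the two error contributions gives precisely $O\bigl(x^{k-1/2}(\sqrt{a_1}+\cdots+\sqrt{a_k})/(a_1\cdots a_k)\bigr)$, closing the induction.

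The main obstacle is not any single estimate but keeping every implied constant uniform in $x$ and in $a_1,\dots,a_k$ throughout the recursion, and handling the degenerate ranges. In particular, when the constraint admits no admissible tuple one has $a_1+\cdots+a_k>x$, hence $\sqrt{x}\ll\sqrt{a_1}+\cdots+\sqrt{a_k}$, so the vanishing sum is still consistent with the asymptotic because the main term $\frac{x^k}{2^kk!\,a_1\cdots a_k}$ is itself absorbed into the error term; the same observation covers every regime in which $x$ is not large compared with the $a_i$.
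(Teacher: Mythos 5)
Your proof is correct, and the inductive skeleton (peel off the last variable, substitute the induction hypothesis, bound the inherited error trivially) is identical to the paper's; the genuine difference lies in how the central sum $\sum_{n_k\le\sqrt{x/a_k}} n_k\,(x-a_kn_k^2)^{k-1}$ is evaluated. The paper expands $(x-a_kn_k^2)^{k-1}$ by the binomial theorem, applies the elementary power-sum formula $\sum_{n\le y}n^{2j+1}=\tfrac{y^{2j+2}}{2j+2}+O(y^{2j+1})$ termwise, and collapses the result via the identity $\sum_{j=0}^{k-1}\tfrac{(-1)^j}{j+1}\binom{k-1}{j}=\tfrac1k$; indeed the paper remarks explicitly that, unlike in Lemma~\ref{Lemma_number_k}, it only needs this power-sum formula and not Euler summation. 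You go the other way: you reuse Euler summation (Lemma~\ref{Lemma_Euler_sum}) with $g(t)=t(x-a_kt^2)^{k-1}$, evaluate the integral exactly by the substitution $u=x-a_kt^2$, and control the correction by total variation. Your route gets the constant $\tfrac{1}{2a_kk}$ in one stroke with no binomial bookkeeping, at the price of one point you should spell out: the bound $\int_0^N|g'(t)|\,dt\le 2\max g$ is not automatic for an arbitrary smooth $g$ (oscillation could inflate the variation), and here it rests on unimodality, which you should verify by computing
\begin{equation*}
g'(t)=(x-a_kt^2)^{k-2}\bigl(x-(2k-1)a_kt^2\bigr),
\end{equation*}
which changes sign exactly once on $\bigl(0,\sqrt{x/a_k}\,\bigr)$. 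With that line added, both arguments yield the same estimate $\tfrac{x^k}{2a_kk}+O\bigl(x^{k-1/2}/\sqrt{a_k}\bigr)$ and the induction closes identically; your closing remarks on uniformity and on the degenerate range $a_1+\cdots+a_k>x$ are correct and are points the paper passes over in silence.
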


The following lemma is a part of \cite[Lemma\ 3.1]{HilTot2016}, concerning functions
in the class ${\cal A}_r$ to be defined in Section \ref{Sect_Estimates_LCM}, proved
by the Euler product representation of the involved multiple Dirichlet series. 

\begin{lemma} \label{Lemma_HilTot} If $k \ge 2$ and $f\in {\cal A}_r$, where $r\ge 0$ is a real number, then
\begin{equation*}
\sum_{n_1,\ldots,n_k=1}^{\infty}
\frac{f([n_1,\ldots,n_k])}{n_1^{z_1}\cdots n_k^{z_k}}
= \zeta(z_1-r)\cdots \zeta(z_k-r)
H_{f,k}(z_1,\ldots,z_k),
\end{equation*}
where the multiple Dirichlet series 
\begin{equation*}
H_{f,k}(z_1,\ldots,z_k):= \sum_{n_1,\ldots,n_k=1}^{\infty}
\frac{h_{f,k}(n_1,\ldots,n_k)}{n_1^{z_1}\cdots n_k^{z_k}}
\end{equation*}
is absolutely convergent for $z_j\in \C$ with $\Re z_j  > r+1/2$ \textup{($1\le j \le k$)}.
\end{lemma}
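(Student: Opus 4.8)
The plan is to recover the factorization from the Euler product of the $k$-fold Dirichlet series and then to locate the boundary $\Re z_j = r+1/2$ at the level of the local factors.

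First I would use that $f$ is multiplicative (part of membership in ${\cal A}_r$) together with the fact that the LCM is determined primewise: if $n_j=\prod_p p^{a_{j,p}}$, then $[n_1,\ldots,n_k]=\prod_p p^{\max_j a_{j,p}}$. Consequently the summand $f([n_1,\ldots,n_k])\,n_1^{-z_1}\cdots n_k^{-z_k}$ is jointly multiplicative in $(n_1,\ldots,n_k)$, so the series factors as $\prod_p E_p$ with
\[
E_p=E_p(z_1,\ldots,z_k)=\sum_{a_1,\ldots,a_k\ge 0} f\bigl(p^{\max(a_1,\ldots,a_k)}\bigr)\,p^{-a_1z_1-\cdots-a_kz_k}.
\]
The growth bound built into ${\cal A}_r$ guarantees absolute convergence in a suitable region, legitimizing the rearrangement.

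Next I would divide out the $k$ zeta factors. With $b_j:=p^{r-z_j}$ the local factor of $\zeta(z_j-r)$ is $(1-b_j)^{-1}$, and $\prod_{j}(1-b_j)^{-1}=\sum_{a} p^{r\sum_j a_j}\prod_j p^{-a_jz_j}$. Defining the local factor of $H_{f,k}$ by $h_p:=E_p\prod_{j}(1-b_j)$ gives $H_{f,k}(z_1,\ldots,z_k)=\prod_p h_p$, and the discrepancy to be estimated is
\[
E_p-\prod_{j}(1-b_j)^{-1}=\sum_{a}\Bigl[f\bigl(p^{\max_j a_j}\bigr)-p^{r\sum_j a_j}\Bigr]\prod_j p^{-a_jz_j},
\]
where the term $a=0$ vanishes since $f(1)=1$.

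Finally I would estimate this discrepancy with $\sigma:=\min_j\Re z_j$. A single nonzero index $a_j=m$ contributes $[f(p^m)-p^{rm}]p^{-mz_j}$, which the defining conditions of ${\cal A}_r$ (pinning $f(p^m)$ to $p^{rm}$ up to an error of order $p^{(r-1/2)m}$) bound by $\ll p^{(r-1/2)m-m\sigma}$, summable over $p$ for $\sigma>r+1/2$. The decisive contribution comes from at least two nonzero indices: the leading such term, $a_i=a_j=1$, equals $[f(p)-p^{2r}]p^{-z_i-z_j}$ of size $\asymp p^{2r-2\sigma}=p^{2(r-\sigma)}$, because $f(p)=O(p^r)$ cannot cancel the $p^{2r}$ produced by multiplying two zeta Euler factors. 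Since $\sum_p p^{2(r-\sigma)}$ converges exactly for $\sigma>r+1/2$, one obtains $h_p=1+O(p^{2(r-\sigma)})$ and hence absolute convergence of $\prod_p h_p=H_{f,k}$ in $\Re z_j>r+1/2$. The main obstacle is to majorize the full off-diagonal sum uniformly: one must check that terms with many indices equal to $1$ (so that $\max_j a_j=1$ but $\sum_j a_j$ is large) do not spoil the bound. Here the relevant size is $p^{(r-\sigma)\sum_j a_j}$, which only improves once $\sum_j a_j\ge 2$, so a geometric majorant in $p^{r-\sigma}$ closes the argument, and it is precisely the pairwise term that fixes the half-plane $\Re z_j>r+1/2$.
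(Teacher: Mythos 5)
Your overall route is the right one, and it is in fact the only route the paper points to: the paper does not prove this lemma at all, but quotes it from \cite[Lemma 3.1]{HilTot2016} with the remark that it is ``proved by the Euler product representation of the involved multiple Dirichlet series'' --- precisely the factor-out-the-zetas-and-estimate-the-local-factors argument you carry out. Your identification of the pairwise term $a_i=a_j=1$, of size $p^{2(r-\sigma)}$, as the term that forces the boundary $\Re z_j>r+1/2$ is correct and is the heart of the matter.

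One step is misquoted, although the damage is repairable on the spot. You assert that membership in ${\cal A}_r$ pins $f(p^m)$ to $p^{rm}$ within $O(p^{(r-1/2)m})$ for every $m\ge 1$; the definition gives this only for $m=1$, while for $m\ge 2$ it gives merely $|f(p^m)|\le C_2\,p^{rm}$, so $|f(p^m)-p^{rm}|$ can genuinely be of order $p^{rm}$ (the multiplicative function with $f(p)=p^r$ and $f(p^m)=2p^{rm}$ for $m\ge 2$ lies in ${\cal A}_r$). The fix costs nothing: for $m\ge 2$ bound the single-index term crudely by $p^{m(r-\sigma)}$, so that its sum over $m\ge 2$ is $\ll p^{2(r-\sigma)}$, the same majorant as your pairwise term and summable over $p$ exactly when $\sigma>r+1/2$; the sharper bound $p^{r-1/2-\sigma}$ is needed (and available) only for $m=1$. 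A final cosmetic point: the lemma asserts absolute convergence of the multiple Dirichlet series $H_{f,k}$, a coefficientwise statement, which is formally stronger than convergence of the product $\prod_p h_p$; but since every estimate you make is a majorant on absolute values of the local coefficients of $h_p=1+D_p\prod_j(1-b_j)$, with $\prod_j|1-b_j|$ bounded by $2^k$, your computation already delivers the coefficientwise statement --- it should just be phrased that way.
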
 

\section{Main results} \label{Sect_Main_results}

\subsection{Spherical summations of arbitrary functions}

For functions $F,G:\N^k\to \C$ ($k\ge 1$) consider their convolution $F*G$ defined by
\begin{equation} \label{convo_k}
(F*G)(n_1,\ldots,n_k)=\sum_{d_1\mid n_1, \ldots, d_k\mid n_k} F(d_1,\ldots,d_k) G(n_1/d_1,\ldots, n_k/d_k), 
\end{equation}
and the generalized M\"obius function $\mu(n_1,\ldots,n_k) = \mu(n_1)\cdots \mu(n_k)$, which is the inverse of the $k$-variable 
constant $1$ function under convolution \eqref{convo_k}. See the survey \cite{Tot2014} on properties of (multiplicative) arithmetic 
functions of several variables. 

Our first result is the following.
\begin{theorem} \label{Th_Wintner_gen} Let $F:\N^k \to \C$ be an arbitrary arithmetic function of $k$ variables 
\textup{($k\ge 1$)} and assume that the multiple 
Dirichlet series
\begin{equation*}
\sum_{n_1,\ldots,n_k=1}^{\infty} \frac{(\mu*F)(n_1,\ldots,n_k)}{n_1^{z_1}\cdots n_k^{z_k}}
\end{equation*}
is absolutely convergent provided that $z_j\in \C$ with $\Re z_j\ge t$ \textup{($1\le j\le k$)}, where $0<t\le 1$ is a real number.

i) If $t=1$,  then 
\begin{equation*}
\lim_{x\to \infty} \frac1{x^{k/2}} \sum_{\substack{n_1,\ldots,n_k\in \N\\ n_1^2+\cdots +n_k^2\le x}} F(n_1,\ldots,n_k) = 
\frac{V_k}{2^k} B_{F,k},
\end{equation*}
where $V_k$ is given by \eqref{V_k}, and 
\begin{equation*} 
B_{F,k}: =\sum_{n_1,\ldots,n_k=1}^{\infty} \frac{(\mu*F)(n_1,\ldots,n_k)}{n_1\cdots n_k}.  
\end{equation*}

ii) If $0<t<1$, then 
\begin{equation*}
\sum_{\substack{n_1,\ldots,n_k\in \N\\ n_1^2+\cdots +n_k^2\le x}} F(n_1,\ldots,n_k) = \frac{V_k}{2^k} B_{F,k} x^{k/2} + O(x^{(k-1+t)/2}).    
\end{equation*}
\end{theorem}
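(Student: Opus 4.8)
The plan is to reduce the spherical sum to a weighted count of lattice points in ellipsoids and then invoke Lemma \ref{Lemma_number_k}. Set $G = \mu * F$, so that $F = \1 * G$, where $\1$ is the $k$-variable constant $1$ function; that is,
\begin{equation*}
F(n_1,\ldots,n_k) = \sum_{d_1\mid n_1,\ldots,d_k\mid n_k} G(d_1,\ldots,d_k).
\end{equation*}
Substituting this and interchanging the (finite) order of summation, writing $n_j = d_j m_j$, the constraint $n_1^2+\cdots+n_k^2\le x$ becomes $d_1^2m_1^2+\cdots+d_k^2m_k^2\le x$, whose solution set is nonempty precisely when $d_1^2+\cdots+d_k^2\le x$. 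Thus
\begin{equation*}
\sum_{\substack{n_1,\ldots,n_k\in\N\\ n_1^2+\cdots+n_k^2\le x}} F(n_1,\ldots,n_k) = \sum_{\substack{d_1,\ldots,d_k\in\N\\ d_1^2+\cdots+d_k^2\le x}} G(d_1,\ldots,d_k) \sum_{\substack{m_1,\ldots,m_k\in\N\\ d_1^2m_1^2+\cdots+d_k^2m_k^2\le x}} 1.
\end{equation*}
I would then apply Lemma \ref{Lemma_number_k} with $a_j = d_j^2$ (so $\sqrt{a_j}=d_j$ and $\sqrt{a_1\cdots a_k}=d_1\cdots d_k$), obtaining the inner count as $\frac{V_k x^{k/2}}{2^k d_1\cdots d_k} + O\bigl(x^{(k-1)/2}(d_1+\cdots+d_k)/(d_1\cdots d_k)\bigr)$, uniformly in $d$. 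This produces the main term $\frac{V_k x^{k/2}}{2^k}\sum_{d_1^2+\cdots+d_k^2\le x} G(d)/(d_1\cdots d_k)$ together with an error $O\bigl(x^{(k-1)/2}\sum_{d_1^2+\cdots+d_k^2\le x}|G(d)|(d_1+\cdots+d_k)/(d_1\cdots d_k)\bigr)$, writing $G(d):=G(d_1,\ldots,d_k)$.

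For part (i) ($t=1$), absolute convergence at $z_j=1$ guarantees that $B_{F,k}=\sum_d G(d)/(d_1\cdots d_k)$ converges absolutely, so the truncated main-term series converges to $B_{F,k}$; it then remains to show the error is $o(x^{k/2})$. Treating the $k$ symmetric pieces of $d_1+\cdots+d_k$ separately, I would bound $x^{-1/2}\sum_{\sum d_j^2\le x}|G(d)|/(d_2\cdots d_k)$ (say) by splitting at a threshold $d_1\le y$ versus $d_1>y$ with $y=x^{1/4}$. On $d_1>y$ one uses $1/\sqrt{x}\le 1/d_1$ to bound the piece by the convergent tail $\sum_{d_1>y}|G(d)|/(d_1\cdots d_k)\to 0$; on $d_1\le y$ one uses $1/(d_2\cdots d_k)\le y/(d_1\cdots d_k)$ to bound it by $(y/\sqrt{x})\sum_d|G(d)|/(d_1\cdots d_k)\to 0$. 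This yields the stated limit.

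For part (ii) ($0<t<1$), I would exploit the stronger absolute convergence at $z_j=t$, which makes $S:=\sum_d|G(d)|/(d_1^t\cdots d_k^t)$ finite, and prove both $O(x^{(k-1+t)/2})$ bounds directly. For the tail of the main term, on the region $d_1^2+\cdots+d_k^2>x$ with $d_j\ge 1$ one has $d_1\cdots d_k\ge \max_j d_j\ge\sqrt{x/k}$, hence $(d_1\cdots d_k)^{-(1-t)}\le (x/k)^{-(1-t)/2}$; writing $1/(d_1\cdots d_k)=(d_1^t\cdots d_k^t)^{-1}(d_1\cdots d_k)^{-(1-t)}$ gives $\sum_{\sum d_j^2>x}|G(d)|/(d_1\cdots d_k)\ll x^{-(1-t)/2}S$, so after multiplying by $x^{k/2}$ the truncation error is $O(x^{(k-1+t)/2})$. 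For the error term from Lemma \ref{Lemma_number_k}, on $\sum d_j^2\le x$ one has $d_1\le\sqrt{x}$, so for the $d_1$-piece $1/(d_2\cdots d_k)=d_1^t(d_1^t\cdots d_k^t)^{-1}(d_2^{1-t}\cdots d_k^{1-t})^{-1}\le x^{t/2}(d_1^t\cdots d_k^t)^{-1}$, giving $\sum_{\sum d_j^2\le x}|G(d)|(d_1+\cdots+d_k)/(d_1\cdots d_k)\ll x^{t/2}S$ and hence error $O(x^{(k-1)/2}\cdot x^{t/2})=O(x^{(k-1+t)/2})$.

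The routine part is the interchange of summation and the invocation of Lemma \ref{Lemma_number_k}; the main obstacle is controlling the two tails uniformly. In part (ii) the key elementary observation making both bounds work is that on the boundary region $\sum d_j^2\gtrless x$ the product $d_1\cdots d_k$ and each coordinate $d_j$ are comparable to $\sqrt{x}$ in the right direction, which lets one trade the excess exponent $1-t$ against a power of $x^{1/2}$. In the borderline case $t=1$ of part (i) this trade is unavailable, and one must instead extract the decay from the convergent tail of the series via the threshold-splitting argument.
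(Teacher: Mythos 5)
Your proposal is correct and follows essentially the same route as the paper: the convolution identity $F=\1*(\mu*F)$, interchange of summation, Lemma \ref{Lemma_number_k} with $a_j=d_j^2$, and then tail estimates that trade powers of $d_j$ against powers of $x^{1/2}$ (your threshold $y=x^{1/4}$ in part (i) and the exponent shift $1-t$ in part (ii) mirror the paper's $\varepsilon\sqrt{x}$ split and its use of the series with exponents $(t,1,\ldots,1)$). The only cosmetic differences are that you truncate the outer sum over the ball $\sum d_j^2\le x$ rather than the box $d_j\le\sqrt{x}$, and use the all-$t$ majorant series; both are equally valid under the hypothesis.
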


For $k=1$ Part i) of Theorem \ref{Th_Wintner_gen} is Wintner's mean value theorem.  See, e.g., \cite[Th.\ 2.19]{Hil}, \cite[p.\ 138]{Pos1988}.
Also, Part i) is the analog of the corresponding result for summation of functions $F(n_1,\ldots,n_k)$ with $n_1,\ldots,n_k\le x$, 
obtained by Ushiroya \cite{Ush2012}. Note that if $F$ is multiplicative, then
\begin{equation*}
B_{F,k} = \prod_p \left(1-\frac1{p}\right)^k \sum_{\nu_1,\ldots,\nu_k=0}^{\infty} \frac{F(p^{\nu_1},\ldots,p^{\nu_k})}{p^{\nu_1+\cdots +\nu_k}}.    
\end{equation*}

To give an application of Theorem \ref{Th_Wintner_gen} we remark that if $f:\N \to \C$ is an arbitrary arithmetic function, then
\begin{equation*}
\sum_{n_1,\ldots,n_k=1}^{\infty} \frac{f((n_1,\ldots,n_k))}{n_1^{z_1}\cdots n_k^{z_k}} = 
\frac{\zeta(z_1)\cdots \zeta(z_k)}{\zeta(z_1+\cdots + z_k)} \sum_{n=1}^{\infty} \frac{f(n)}{n^{z_1+\cdots+z_k}}, 
\end{equation*}
see \cite[Eq.\, (16)]{Tot2014}. In particular, for $f(n)=\tau(n)$,
\begin{equation*}
\sum_{n_1,\ldots,n_k=1}^{\infty} \frac{\tau((n_1,\ldots,n_k))}{n_1^{z_1}\cdots n_k^{z_k}} = 
\zeta(z_1)\cdots \zeta(z_k) \zeta(z_1+\cdots + z_k),
\end{equation*}
with $z_j\in \C$, $\Re z_j>1$ ($1\le j\le k$).

This shows that for the function $F(n_1,\ldots,n_k)=\tau((n_1,\ldots,n_k))$ we have
\begin{equation} \label{series_tau}
\sum_{n_1,\ldots,n_k=1}^{\infty} \frac{(\mu*F)(n_1,\ldots,n_k)}{n_1^{z_1}\cdots n_k^{z_k}} = 
\zeta(z_1+\cdots + z_k),
\end{equation}
absolutely convergent for $z_1=\cdots =z_k=1$ ($k\ge 2$).

We deduce that for $k\ge 2$,
\begin{equation*}
\lim_{x\to \infty} \frac1{x^{k/2}} \sum_{\substack{n_1,\ldots,n_k\in \N\\ n_1^2+\cdots +n_k^2\le x}} \tau((n_1,\ldots,n_k)) = 
\frac{V_k\zeta(k)}{2^k}
\end{equation*}
and by \eqref{Z_N},
\begin{equation*}
\lim_{x\to \infty} \frac1{x^{k/2}} \sum_{\substack{n_1,\ldots,n_k\in \Z\\ n_1^2+\cdots +n_k^2\le x}} \tau((n_1,\ldots,n_k)) = 
V_k\zeta(k).
\end{equation*}

Moreover, series \eqref{series_tau} is absolutely convergent if $\Re z_j>1/k$ ($1\le j \le k$), hence by Part ii) of Theorem \ref{Th_Wintner_gen}
we deduce the formula with error term
\begin{equation} \label{tau_first}
\sum_{\substack{n_1,\ldots,n_k\in \N\\ n_1^2+\cdots +n_k^2\le x}} \tau((n_1,\ldots,n_k)) = 
\frac{V_k\zeta(k)}{2^k} x^{k/2} + O(x^{(k-1+1/k)/2 + \varepsilon}),
\end{equation}
but this error can be improved, see Corollary \ref{Cor_tau}.

\subsection{Estimates for functions of the GCD} \label{Sect_GCD}

We prove the following estimates.

\begin{theorem} \label{Th_main_1} Let $k\ge 2$ and let $f=g*\1$, where $g$ is a bounded function. Then
\begin{equation*}
\sum_{\substack{n_1,\ldots,n_k\in \Z\\ n_1^2+\cdots +n_k^2\le x}} f((n_1,\ldots,n_k)) = V_k D(g,k)  x^{k/2} + R_k(x),
\end{equation*}
where 
$D(g,k) = \sum_{n=1}^{\infty} \frac{g(n)}{n^k}$,
$R_2(x)\ll \sqrt{x}$, $R_3(x)\ll x^{517/1648+\varepsilon}$, $R_4(x)\ll x (\log x)^{2/3}$, and $R_k(x)\ll x^{k/2-1}$ if $k\ge 5$.
\end{theorem}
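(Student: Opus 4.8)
The plan is to reduce to the one-dimensional identity \eqref{sum_GCD_formula} and then sum. Since $f=g*\1$ we have $\mu*f=g$, which is bounded, say $|g(n)|\le C$; as the point $(0,\ldots,0)$ contributes $f(0)=0$, identity \eqref{sum_GCD_formula} applies and, reversing the order of summation,
\begin{equation*}
\sum_{\substack{n_1,\ldots,n_k\in \Z\\ n_1^2+\cdots +n_k^2 \le x}} f((n_1,\ldots,n_k)) = \sum_{\substack{d,\delta\in \N\\ d^2\delta \le x}} g(d)\,r_k(\delta) = \sum_{d\le \sqrt{x}} g(d) \sum_{1\le \delta\le x/d^2} r_k(\delta).
\end{equation*}
The first step is to evaluate the inner sum by \eqref{number_lattice_points_k}; removing the $\delta=0$ term $r_k(0)=1$ gives, for each $d\le\sqrt{x}$,
\begin{equation*}
\sum_{1\le \delta\le x/d^2} r_k(\delta) = \frac{V_k x^{k/2}}{d^k}+P_k(x/d^2)-1.
\end{equation*}

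Substituting this back produces three pieces. The main piece $V_k x^{k/2}\sum_{d\le\sqrt{x}} g(d)/d^k$ is handled by completing the series: since $k\ge 2$ and $g$ is bounded, $D(g,k)=\sum_{d\ge 1} g(d)/d^k$ converges absolutely with tail $\sum_{d>\sqrt{x}} g(d)/d^k\ll x^{(1-k)/2}$, so this piece equals $V_k D(g,k)\,x^{k/2}+O(\sqrt{x})$. The piece $-\sum_{d\le\sqrt{x}} g(d)$ is trivially $O(\sqrt{x})$. Thus the entire problem reduces to bounding
\begin{equation*}
E_k(x):=\sum_{d\le \sqrt{x}} g(d)\,P_k(x/d^2),\qquad |E_k(x)|\ll \sum_{d\le\sqrt{x}} |P_k(x/d^2)|.
\end{equation*}

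Estimating $E_k(x)$ is the crux, and I would carry it out case by case from the bounds of Section \ref{Sect_Known_estimates}. For $k\ge 5$, $P_k(x/d^2)\ll (x/d^2)^{k/2-1}$ together with $\sum_d d^{-(k-2)}<\infty$ gives $E_k(x)\ll x^{k/2-1}$; for $k=4$, $P_4(x/d^2)\ll (x/d^2)(\log x)^{2/3}$ and $\sum_d d^{-2}<\infty$ give $E_4(x)\ll x(\log x)^{2/3}$; for $k=3$, $P_3(y)\ll y^{21/32+\varepsilon}$ gives $E_3(x)\ll x^{21/32+\varepsilon}\sum_d d^{-21/16}\ll x^{21/32+\varepsilon}$. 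In each of these the $d$-series converges, so $E_k(x)$ has the same order as $P_k(x)$, which dominates the $O(\sqrt{x})$ from the first two pieces. The delicate case is $k=2$: here the deep Bourgain--Watt exponent is useless because the main-term tail already costs $\sqrt{x}$, so I would instead use the classical bound $P_2(y)\ll y^{1/3}$, obtaining $E_2(x)\ll x^{1/3}\sum_{d\le\sqrt{x}} d^{-2/3}\ll x^{1/3}\cdot x^{1/6}=\sqrt{x}$ (indeed any exponent $\theta<1/2$ works, since then $x^{\theta}\cdot x^{(1-2\theta)/2}=\sqrt{x}$). In all cases the asymptotic $P_k$-bound should be applied only where $x/d^2$ is large, the $O(\sqrt{x})$ values of $d$ with $x/d^2$ bounded being disposed of by $P_k(y)\ll 1$; assembling the pieces yields the asserted formula. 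The main obstacle is exactly this convergence bookkeeping in $E_k(x)$, i.e.\ matching the size of the $P_k$-exponent against the behavior of the $d$-series $\sum_{d\le\sqrt x} d^{-2\theta}$.
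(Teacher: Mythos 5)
Your argument is correct and is essentially the paper's own proof: the paper's identity \eqref{S_gen} is exactly your decomposition (apply \eqref{sum_GCD_formula} with $\mu*f=g$, evaluate the inner sum by \eqref{number_lattice_points_k}), after which the paper likewise completes the series to $D(g,k)$ at the cost of an $O(\sqrt{x})$ tail and estimates $\sum_{d\le\sqrt{x}}P_k(x/d^2)$ termwise from the bounds of Section \ref{Sect_Known_estimates}, including your observation that for $k=2$ any exponent $\theta<1/2$ in $P_2(y)\ll y^{\theta}$ yields $x^{\theta}\cdot x^{(1-2\theta)/2}=\sqrt{x}$. Your explicit handling of the term $r_k(0)=1$ is a detail the paper passes over silently; it is harmless, as you say.

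One discrepancy deserves emphasis. For $k=3$ you obtain $R_3(x)\ll x^{21/32+\varepsilon}$, whereas Theorem \ref{Th_main_1} as printed asserts $R_3(x)\ll x^{517/1648+\varepsilon}$. Your bound is the correct conclusion of this method, and the printed exponent is evidently a misprint: $517/1648$ is the Bourgain--Watt exponent for the two-dimensional circle problem, while Heath-Brown's exponent $21/32$ is the relevant one for $P_3$. Moreover, no argument of this shape could give $x^{517/1648+\varepsilon}$ when $k=3$, because the tail of the completed series, $x^{3/2}\sum_{d>\sqrt{x}}|g(d)|\,d^{-3}$, is already of size $\sqrt{x}$ for general bounded $g$, and $\sqrt{x}$ exceeds $x^{517/1648}$. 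The paper's own proof, which (like yours) simply invokes the known estimates for $P_k$, produces $x^{21/32+\varepsilon}$ as well; so what your proposal proves is the theorem with the $k=3$ remainder corrected to $R_3(x)\ll x^{21/32+\varepsilon}$, and that is the statement that should be recorded.
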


This applies, i.e., for the functions $f(n)=\tau(n)$ and $f(n)= \sum_{d\mid n} \mu^2(d)$, representing the number of squarefree divisors of $n$,
giving better error terms than in \eqref{tau_first}. If $f(n)=\tau(n)$, then we deduce the next results.

\begin{cor} \label{Cor_tau} For every $k\ge 2$, 
\begin{equation*}
\sum_{\substack{n_1,\ldots,n_k\in \Z \\ n_1^2+\cdots +n_k^2\le x}} \tau((n_1,\ldots,n_k)) = V_k \zeta(k) x^{k/2} + 
R_k(x),
\end{equation*} 
furthermore, for summation over natural numbers,
\begin{equation*}
\sum_{\substack{n_1,n_2\in \N \\ n_1^2+n_2^2\le x}} \tau((n_1,n_2)) = \frac{\pi^3}{24} x -\frac1{2} \sqrt{x} \log x + 
O(\sqrt{x}),
\end{equation*}
and for $k\ge 3$,
\begin{equation*}
\sum_{\substack{n_1,\ldots,n_k\in \N \\ n_1^2+\cdots +n_k^2\le x}} \tau((n_1,\ldots,n_k)) = \frac1{2^k} \left(V_k\zeta(k) x^{k/2} - k V_{k-1}\zeta(k-1)x^{(k-1)/2} 
\right) + R_k(x),
\end{equation*}
where $V_k$ is defined by \eqref{V_k}, and  $R_k(x)$ is given in Theorem \ref{Th_main_1}. 
\end{cor}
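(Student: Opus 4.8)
The plan is to derive all three formulas from Theorem~\ref{Th_main_1} together with the inversion formula~\eqref{N_Z}. The starting point is that $\tau=\1*\1$, so $\tau=g*\1$ with the bounded function $g=\1$; Theorem~\ref{Th_main_1} then applies with
\[
D(\1,k)=\sum_{n=1}^{\infty}\frac{1}{n^{k}}=\zeta(k)\qquad (k\ge 2),
\]
which gives the first (integer) formula immediately, with the stated errors $R_k(x)$. Abbreviating by $S_m(x)$ the integer sum $\sum_{n_1^2+\cdots+n_m^2\le x}\tau((n_1,\ldots,n_m))$ over $\Z^m$, the two $\N$-formulas are obtained by specializing~\eqref{N_Z}, which writes the $\N$-sum in $k$ variables as $2^{-k}\sum_{j=0}^{k-1}(-1)^{j}\binom{k}{j}S_{k-j}(x)$.

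For every summand with $k-j\ge 2$ I would insert the integer formula just established, producing a main term $V_{k-j}\zeta(k-j)x^{(k-j)/2}$ and an error $R_{k-j}(x)$. The only piece lying outside the scope of Theorem~\ref{Th_main_1} is the one-variable sum $S_1(x)=\sum_{n\in\Z,\, n^2\le x}\tau(|n|)=2\sum_{n\le\sqrt{x}}\tau(n)$, which I would handle by the classical Dirichlet divisor estimate $\sum_{n\le y}\tau(n)=y\log y+(2\gamma-1)y+O(\sqrt{y})$ at $y=\sqrt{x}$, giving $S_1(x)=\sqrt{x}\log x+O(\sqrt{x})$. For $k=2$, formula~\eqref{N_Z} is $2^{-2}\bigl(S_2(x)-2S_1(x)\bigr)$; substituting $S_2(x)=V_2\zeta(2)x+O(\sqrt{x})=\tfrac{\pi^{3}}{6}x+O(\sqrt{x})$ and the value of $S_1(x)$ yields exactly $\tfrac{\pi^{3}}{24}x-\tfrac12\sqrt{x}\log x+O(\sqrt{x})$, and one sees that the logarithmic secondary term is precisely the one-dimensional divisor contribution.

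For $k\ge 3$ the indices $j=0$ and $j=1$ supply the two displayed main terms $2^{-k}V_k\zeta(k)x^{k/2}$ and $-2^{-k}k\,V_{k-1}\zeta(k-1)x^{(k-1)/2}$, and it remains to show that all other contributions fall into the error. These contributions are the sphere-sum main terms $\binom{k}{j}V_{k-j}\zeta(k-j)x^{(k-j)/2}$ for $2\le j\le k-2$ (the largest of order $x^{(k-2)/2}$), the errors $R_{k-j}(x)$ for $1\le j\le k-1$, and the one-variable term, of order $\sqrt{x}\log x$.

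This last order comparison is the only real obstacle. For $k\ge 5$ the dominant leftover is $x^{(k-2)/2}=x^{k/2-1}\asymp R_k(x)$, with everything else of strictly smaller order; for $k=4$ the leftovers $x^{1}$, $\sqrt{x}\log x$, $R_2(x)$ and $R_3(x)$ are each $\ll x(\log x)^{2/3}=R_4(x)$. Hence the claimed error $R_k(x)$ is justified for $k\ge 4$. The delicate value is $k=3$: there the one-variable term already contributes $\tfrac38\sqrt{x}\log x$, which exceeds $R_3(x)\ll x^{517/1648+\varepsilon}$, so for $k=3$ the genuine remainder is of order $\sqrt{x}\log x$ and the error in that line is controlled by this one-dimensional divisor sum rather than by the three-dimensional remainder.
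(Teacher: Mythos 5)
Your derivation follows exactly the paper's own route: the integer-sum formula is the case $g=\1$, $D(\1,k)=\zeta(k)$ of Theorem~\ref{Th_main_1}, and both $\N$-formulas come from the binomial inversion \eqref{N_Z} together with a one-dimensional divisor-sum estimate (the paper even remarks that the cruder bound $\sum_{n\le y}\tau(n)=y\log y+O(y)$ suffices, so your use of the full Dirichlet formula is harmless overkill). Your main-term bookkeeping and the absorption of the leftover terms for $k=2$, $k=4$ and $k\ge 5$ are all correct.

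The one place where you part company with the corollary is your final claim that the $k=3$ case fails. Your arithmetic there is right, but the conclusion to draw is different: the bound $R_3(x)\ll x^{517/1648+\varepsilon}$ stated in Theorem~\ref{Th_main_1} is a misprint, not a constraint your proof has to respect. The exponent $517/1648$ is the Bourgain--Watt exponent for the \emph{circle} problem, i.e.\ for $P_2$; in the proof of Theorem~\ref{Th_main_1} it enters only at $k=2$, where, writing $\theta=517/1648+\varepsilon<1/2$,
\begin{equation*}
\sum_{d\le\sqrt{x}}P_2(x/d^2)\ll x^{\theta}\sum_{d\le\sqrt{x}}d^{-2\theta}\ll x^{\theta}\cdot x^{1/2-\theta}=x^{1/2},
\end{equation*}
which is how $R_2(x)\ll\sqrt{x}$ arises. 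For $k=3$ the same argument uses Heath-Brown's sphere estimate $P_3(y)\ll y^{21/32+\varepsilon}$, and since $2\cdot 21/32>1$ the sum $\sum_{d\le\sqrt{x}}P_3(x/d^2)$ is $\ll x^{21/32+\varepsilon}$; so the bound the theorem actually proves is $R_3(x)\ll x^{21/32+\varepsilon}$. With this corrected value your troublesome term $\tfrac38\sqrt{x}\log x$, as well as the contribution $\tfrac38 R_2(x)\ll\sqrt{x}$, is absorbed, because $1/2<21/32$, and the $k=3$ line of Corollary~\ref{Cor_tau} holds exactly as printed. Nothing in your argument needs to change structurally; only the misprinted exponent should be replaced.
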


\begin{theorem} \label{Th_main_id} Let $f=g*\id$, where $g$ is a bounded function. Then for $k=2$,
\begin{equation} \label{form_2}
\sum_{\substack{n_1,n_2\in \Z\\ n_1^2+n_2^2\le x}} f((n_1,n_2)) = \frac3{\pi} D(g,2)  x\log x + O(x),
\end{equation}
and for $k\ge 3$,
\begin{equation} \label{form_k_3}
\sum_{\substack{n_1,\ldots,n_k\in \Z\\ n_1^2+\cdots +n_k^2\le x}} f((n_1,\ldots,n_k)) = V_k \frac{\zeta(k-1)}{\zeta(k)} D(g,k) x^{k/2} + Q_k(x),
\end{equation}
where $D(g,k) = \sum_{n=1}^{\infty} \frac{g(n)}{n^k}$, $Q_3(x)\ll x$, $Q_4(x)\ll x (\log x)^{5/3}$, and $Q_k(x)\ll x^{k/2-1}$ if $k\ge 5$.
\end{theorem}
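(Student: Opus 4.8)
The plan is to reduce everything to the known lattice‑point count via the GCD identity, then peel off a main term and control the remainder through the estimates for $P_k$. First I would apply \eqref{sum_GCD_formula} to rewrite the left‑hand side as $\sum_{d^2\delta\le x}(\mu*f)(d)\,r_k(\delta)$. Since $f=g*\id$ and $\mu*\id=\varphi$, associativity of Dirichlet convolution gives $\mu*f=g*\varphi=:h$, and the boundedness of $g$ (say $|g|\le C$) together with $\sum_{e\mid d}\varphi(d/e)=d$ yields the crucial pointwise bound $|h(d)|\le Cd$. Reversing the order of summation, I would write the sum as $\sum_{d\le\sqrt{x}}h(d)\sum_{\delta\le x/d^2}r_k(\delta)$ and insert \eqref{number_lattice_points_k}, so that each inner sum equals $V_k(x/d^2)^{k/2}+P_k(x/d^2)$. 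This splits the whole expression into a main term $V_kx^{k/2}\sum_{d\le\sqrt{x}}h(d)/d^k$ and an error term $E(x):=\sum_{d\le\sqrt{x}}h(d)P_k(x/d^2)$.

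For $k\ge 3$ I would complete the Dirichlet series: since $|h(d)|/d^k\le C/d^{k-1}$ is summable, $\sum_{d\ge1}h(d)/d^k$ converges, and by the convolution $h=g*\varphi$ it factors as $D(g,k)\sum_{b\ge1}\varphi(b)/b^k=D(g,k)\,\zeta(k-1)/\zeta(k)$. The tail $\sum_{d>\sqrt{x}}h(d)/d^k\ll x^{-(k-2)/2}$ contributes $V_kx^{k/2}\cdot O(x^{-(k-2)/2})=O(x)$ to the main term, which is exactly the claimed shape. It then remains to estimate $E(x)$. Using $|h(d)|\le Cd$, for $k\ge 5$ I get $|E(x)|\ll x^{k/2-1}\sum_{d\le\sqrt{x}}d^{3-k}\ll x^{k/2-1}$, the $d$‑sum converging; for $k=4$ the same computation, now with the extra $(\log x)^{2/3}$ from $P_4$ and $\sum_{d\le\sqrt{x}}1/d\ll\log x$, produces $x(\log x)^{5/3}$; and for $k=3$ the subconvexity bound $P_3(x)\ll x^{21/32+\varepsilon}$ makes $x^{21/32+\varepsilon}\sum_{d\le\sqrt{x}}d^{1-21/16-2\varepsilon}$ balance to $x^{21/32+\varepsilon}\cdot x^{11/32-\varepsilon}=x$. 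Each case matches the stated $Q_k$.

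The case $k=2$ is genuinely different, since $\sum_b\varphi(b)/b^2$ diverges and the series cannot simply be completed; this is where the logarithm appears. I would instead evaluate $\sum_{d\le\sqrt{x}}h(d)/d^2$ directly by summing $h=g*\varphi$ over the factorization $d=ab$, reducing to the classical estimate $\sum_{n\le z}\varphi(n)/n^2=\tfrac{6}{\pi^2}\log z+O(1)$ (which follows from $\varphi(n)/n=\sum_{e\mid n}\mu(e)/e$ and $\sum_{m\le w}1/m=\log w+O(1)$). Writing $\log(\sqrt{x}/a)=\tfrac12\log x-\log a$ and using absolute convergence of $\sum_a g(a)/a^2$ and $\sum_a g(a)\log a/a^2$, I expect $\sum_{d\le\sqrt{x}}h(d)/d^2=\tfrac{3}{\pi^2}D(g,2)\log x+O(1)$; multiplying by $V_2x=\pi x$ gives the main term $\tfrac{3}{\pi}D(g,2)x\log x+O(x)$. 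Finally $E(x)=\sum_{d\le\sqrt{x}}h(d)P_2(x/d^2)$ is controlled by $P_2(x)\ll x^{517/1648+\varepsilon}$: since $2\cdot\tfrac{517}{1648}-1<0$ the $d$‑sum is dominated by its largest terms, and $|E(x)|\ll x^{517/1648+\varepsilon}\cdot x^{1-517/1648-\varepsilon}=x$, as required.

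The step I expect to be the main obstacle is the bookkeeping in the $k=2$ case: extracting the $x\log x$ term with error only $O(x)$ needs the sharp form $\sum_{n\le z}\varphi(n)/n^2=\tfrac{6}{\pi^2}\log z+O(1)$ rather than a plain asymptotic, and one must verify that the truncation errors (tails of $\sum_a g(a)/a^2$ weighted by $\log x$) stay $O(x)$ after multiplication by $x$. By contrast, the error bounds for $k\ge 3$ are routine once $|h(d)|\le Cd$ is in hand and the exponents are tracked carefully.
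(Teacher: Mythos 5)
Your proposal is correct and follows essentially the same route as the paper: the identity \eqref{sum_GCD_formula} combined with \eqref{number_lattice_points_k} to split off the main term $V_k x^{k/2}\sum_{d\le\sqrt{x}}(g*\varphi)(d)/d^k$, the bound $|(g*\varphi)(d)|\ll d$, completion of the series to $\zeta(k-1)\zeta(k)^{-1}D(g,k)$ for $k\ge 3$ with the error exponents worked out exactly as in the paper, and for $k=2$ the same use of $\sum_{n\le z}\varphi(n)/n^2=\tfrac{6}{\pi^2}\log z+O(1)$ through the factorization $d=ab$. The only cosmetic remark is that Heath-Brown's bound $P_3(x)\ll x^{21/32+\varepsilon}$ is a lattice-point estimate rather than a ``subconvexity bound,'' but this does not affect the argument.
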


\begin{cor} \label{Cor_g_id} Let $f=g*\id$, where $g$ is a bounded function. Then 
\begin{equation*}
\sum_{\substack{n_1,n_2\in \N \\ n_1^2+n_2^2\le x}} f((n_1,n_2)) = \frac{3}{4\pi} D(g,2) x \log x  + O(x),
\end{equation*}
\begin{equation*}
\sum_{\substack{n_1,n_2,n_3\in \N \\ n_1^2+n_2^2+n_3^2\le x}} f((n_1,n_2,n_3)) = \frac{\pi^3}{36\zeta(3)} D(g,3) x^{3/2} - 
\frac{9}{8\pi}D(g,2)x\log x + O(x),
\end{equation*}
and for $k\ge 4$,
\begin{equation*}
\sum_{\substack{n_1,\ldots,n_k\in \N \\ n_1^2+\cdots +n_k^2\le x}} f((n_1,\ldots,n_k)) 
\end{equation*}
\begin{equation*}
= \frac1{2^k} \left(V_k\frac{\zeta(k-1)}{\zeta(k)} D(g,k) x^{k/2} - k 
V_{k-1}\frac{\zeta(k-2)}{\zeta(k-1)}D(g,k-1) x^{(k-1)/2} 
\right)  + Q_k(x), 
\end{equation*}
where $Q_k(x)$ is given in Theorem \ref{Th_main_id}.
\end{cor}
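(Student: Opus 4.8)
The plan is to derive Corollary \ref{Cor_g_id} directly from Theorem \ref{Th_main_id} using the conversion formula \eqref{N_Z}, which expresses the summation over natural numbers as a signed combination of summations over the integers in dimensions $k, k-1, \ldots, 1$. Concretely, applying \eqref{N_Z} to $f((n_1,\ldots,n_k))$ gives
\begin{equation*}
\sum_{\substack{n_1,\ldots,n_k\in \N\\ n_1^2+\cdots+n_k^2\le x}} f((n_1,\ldots,n_k)) = \frac1{2^k} \sum_{j=0}^{k-1} (-1)^j \binom{k}{j} S_{k-j}(x),
\end{equation*}
where $S_m(x)$ denotes the integer-summation over an $m$-dimensional sphere. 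The idea is that Theorem \ref{Th_main_id} supplies the leading asymptotics for each $S_m(x)$ with $m\ge 2$, so I would substitute those in and collect terms by order of magnitude.

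The key computational point is that $S_m(x)$ has main term of order $x^{m/2}$ (with a logarithmic factor when $m=2$), so in the signed sum the $j=0$ term ($m=k$) dominates, the $j=1$ term ($m=k-1$) is the next order, and all remaining terms $j\ge 2$ are absorbed into the error. Thus I would first handle $k=2$: here the sum has only the $j=0$ term $S_2(x)$ (the $j=1$ term involves the one-dimensional sum $\sum_{n^2\le x} f(|n|)$, which is $O(\sqrt{x})$ and hence swallowed by the error), giving $\frac14$ times the $k=2$ formula \eqref{form_2}. For $k=3$ I would take $\frac18\bigl(S_3(x) - 3 S_2(x)\bigr)$, reading the $S_3$ main term from \eqref{form_k_3} and the $S_2$ main term from \eqref{form_2}, with the lower-dimensional contributions folded into $Q_3(x)\ll x$. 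For general $k\ge 4$ I would retain only the $j=0$ and $j=1$ terms explicitly, using \eqref{form_k_3} for both $S_k(x)$ and $S_{k-1}(x)$, which produces precisely the stated two-term main expression with coefficients $V_k \frac{\zeta(k-1)}{\zeta(k)} D(g,k)$ and $-k V_{k-1}\frac{\zeta(k-2)}{\zeta(k-1)} D(g,k-1)$.

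The main thing to verify carefully is that the error terms combine correctly, i.e.\ that the error from each $S_{k-j}(x)$, after multiplication by $\binom{k}{j}/2^k$, stays within the claimed bound $Q_k(x)$. Since the error in \eqref{form_k_3} is $Q_m(x)$ of order at most $x^{m/2-1}$ (with the logarithmic refinements for small $m$), and since each discarded main term for $j\ge 2$ is of order $x^{(k-j)/2}\le x^{(k-2)/2}=x^{k/2-1}$, every discarded quantity is indeed $O(Q_k(x))$; one just has to confirm that the dimension at which the worst error arises matches the stated growth of $Q_k$. I expect the only genuinely delicate bookkeeping to be at the boundary cases $k=3$ and $k=4$, where a main term from a lower dimension (the $x\log x$ term from dimension $2$, or the $x(\log x)^{5/3}$ error from dimension $4$) sits right at the threshold of the error term and must be tracked explicitly rather than absorbed. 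Everything else is a routine substitution of the formulas from Theorem \ref{Th_main_id} into the binomial-inversion identity \eqref{N_Z}.
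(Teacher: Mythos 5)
Your overall route is the same as the paper's: apply the binomial inversion formula \eqref{N_Z}, substitute the asymptotics of Theorem \ref{Th_main_id} for the integer sums in dimensions $\ge 2$, keep the $j=0$ and $j=1$ terms explicitly, and absorb the rest into the error. Your coefficient bookkeeping for those two terms is correct: $\tfrac14\cdot\tfrac{3}{\pi}=\tfrac{3}{4\pi}$, $\tfrac18 V_3\tfrac{\zeta(2)}{\zeta(3)}=\tfrac{\pi^3}{36\zeta(3)}$, $-\tfrac38\cdot\tfrac{3}{\pi}=-\tfrac{9}{8\pi}$, and the general two-term expression for $k\ge 4$ all match the statement.

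There is, however, a genuine error in your treatment of the one-dimensional term $j=k-1$. You assert that $\sum_{n^2\le x} f(|n|)$ is $O(\sqrt{x})$, and more generally that every discarded term with $j\ge 2$ has order $x^{(k-j)/2}$. This is false for the one-dimensional sum: since $f=g*\id$, one has $f(n)=\sum_{d\mid n} g(d)\,(n/d)$, which is of size roughly $n$, so $\sum_{n\le \sqrt{x}} f(n)=\tfrac{x}{2}D(g,2)+O(\sqrt{x}\log x)$ is of order $x$, not $\sqrt{x}$ --- larger than your claim by a factor of $\sqrt{x}$. This is exactly why the paper's proof invokes, alongside \eqref{N_Z}, the explicit estimate $\sum_{n\le y}(g*\id)(n)=\tfrac{y^2}{2}\sum_{n=1}^{\infty}g(n)/n^2+O(y\log y)$. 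Your conclusion is saved only because every error term in the corollary is at least of order $x$: the order-$x$ contribution of the one-dimensional term (e.g.\ $-\tfrac12 D(g,2)\,x$ for $k=2$, $+\tfrac38 D(g,2)\,x$ for $k=3$) is absorbed into $O(x)$ with no room to spare. So for $k=2$ and $k=3$ the genuinely delicate threshold term is precisely the one you dismissed, and your argument as written passes through a false intermediate step; had the error terms been sharper than $O(x)$, the proof would fail. To repair it, replace the $O(\sqrt{x})$ claim by the one-dimensional estimate above (or at least the bound $\sum_{n\le\sqrt{x}}f(n)\ll x$) and check that order-$x$ quantity against the stated error in each case $k=2$, $k=3$, $k=4$, $k\ge 5$.
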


These results apply, e.g., for the functions $\id=\delta * \id$, $\sigma=\1*\id$, 
$\beta=\lambda*\id$ (alternating sum-of-divisors function, cf. \cite{Tot2013}), 
$\varphi =\mu*\id$ (Euler function), $\psi=\mu^2 * \id$ (Dedekind function).

Next we consider the function $f_{S,\eta}$ implicitly defined by
\begin{equation} \label{f_S_eta_cond}
h_{S,\eta}(n):= (\mu*f_{S,\eta})(n)= \begin{cases} (\log p)^{\eta}, & \text{ if $n=p^\nu$ a prime power with $\nu \in S$},\\ 0, & \text{ otherwise},
\end{cases}
\end{equation}
where $1\in S\subseteq \N$ and $\eta \ge 0$ is real. By M\"{o}bius inversion we obtain that for $n=\prod_p p^{\nu_p(n)}\in \N$,
\begin{equation*}
f_{S,\eta}(n)= \sum_{d\mid n} h_{S,\eta}(d)= \sum_{p\mid n} (\log p)^{\eta} \# \{\nu: 1\le \nu\le \nu_p(n), \nu \in S \},
\end{equation*}
where $f_{S,\eta}(1)=0$ (empty sum). This function was introduced by the authors \cite{HeyTot2021}. 

If $S=\N$, then 
\begin{equation*}
f_{\N,\eta}(n):=  \sum_{p\mid n} \nu_p(n) (\log p)^{\eta},
\end{equation*}
which gives for $\eta=1$, $f_{\N,1}(n)=\log n$, while $h_{\N,1}(n)=\Lambda(n)$ is the von Mangoldt function. For $\eta=0$ one has $f_{\N,0}(n) =\Omega(n)$.

Let $S=\{1\}$. Then
\begin{equation*}
f_{\{1\},\eta}(n):=  \sum_{p\mid n} (\log p)^{\eta},
\end{equation*}
and if $\eta=0$, then $f_{\{1\},0}(n) =\omega(n)$. If $\eta=1$, then $f_{\{1\},1}(n) =\log \kappa(n)$, where $\kappa(n)=\prod_{p\mid n} p$.
Note that $\sum_{n\le x} h_{\{1\},1}(n)=\sum_{p \le x} \log p = \theta(x)$ is the Chebyshev theta function.

\begin{theorem} \label{Th_omega_type} If $1\in S\subseteq \N$, $\eta \ge 0$ is real and $k\ge 2$, then 
\begin{equation*}
\sum_{\substack{n_1,\ldots,n_k\in \Z\\ n_1^2+\cdots +n_k^2\le x}} f_{S,\eta}((n_1,\ldots,n_k)) = V_k 
\Big(\sum_p (\log p)^{\eta} H_{S,k}(p)\Big) x^{k/2} + T_k(x),
\end{equation*}
where
$H_{S,k}:= \sum_{\nu \in S} 1/p^{\nu k}$, $T_k(x)\ll \sqrt{x} (\log x)^{\eta-1}$ for $k\in \{2,3\}$, $T_4(x) \ll x(\log x)^{2/3}$ for $k=4$, and 
$T_k(x)\ll x^{k/2-1}$ for $k\ge 5$.
\end{theorem}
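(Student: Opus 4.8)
The engine of the proof is the convolution identity of Lemma~\ref{Lemma_gcd}. Since $\mu*f_{S,\eta}=h_{S,\eta}$ is supported on the prime powers $p^\nu$ with $\nu\in S$, where it equals $(\log p)^\eta$, identity \eqref{sum_GCD_formula} collapses to
\[
\sum_{\substack{n_1,\ldots,n_k\in\Z\\ n_1^2+\cdots+n_k^2\le x}} f_{S,\eta}((n_1,\ldots,n_k))
=\sum_{\substack{p,\ \nu\in S\\ p^{2\nu}\le x}} (\log p)^\eta \sum_{1\le\delta\le x/p^{2\nu}} r_k(\delta).
\]
First I would insert \eqref{number_lattice_points_k} in the form $\sum_{1\le\delta\le y}r_k(\delta)=V_ky^{k/2}+P_k(y)-1$ and split the result into four pieces: the main term $V_kx^{k/2}\sum_{p^{2\nu}\le x}(\log p)^\eta p^{-\nu k}$, the completion tail obtained by extending this to the full series $\sum_p(\log p)^\eta H_{S,k}(p)$, the contribution of the $-1$, and the error sum $T:=\sum_{p^{2\nu}\le x}(\log p)^\eta P_k(x/p^{2\nu})$. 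The two bookkeeping pieces are dispatched identically for every $k$: the Chebyshev-type bounds $\sum_{p\le t}(\log p)^\eta\ll t(\log t)^{\eta-1}$ and $\sum_{p>t}(\log p)^\eta p^{-k}\ll t^{-(k-1)}(\log t)^{\eta-1}$, both following from $\theta(t)\ll t$ by partial summation, show that each is $\ll\sqrt x(\log x)^{\eta-1}$. Everything thus reduces to estimating $T$.

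For $k\ge4$ this is routine. Substituting $P_4(y)\ll y(\log y)^{2/3}$, resp.\ $P_k(y)\ll y^{k/2-1}$ for $k\ge5$, and pulling out the leading power of $x$ leaves the residual series $\sum_{p,\nu\in S}(\log p)^\eta p^{-\nu(k-2)}$, which converges because $k-2\ge2$; this gives $T\ll x(\log x)^{2/3}$ and $T\ll x^{k/2-1}$ respectively, and in each case $T$ dominates the bookkeeping terms, producing the stated $T_k$. The case $k=2$ is also favorable despite the crude circle bound: with $P_2(y)\ll y^{517/1648+\varepsilon}$ the exponent of $p$ becomes $2\nu\cdot\frac{517}{1648}<\nu$, so the residual prime series now diverges and $\sum_{p\le\sqrt x}(\log p)^\eta p^{-1034/1648}\ll x^{(1-1034/1648)/2}(\log x)^{\eta-1}$; since the two exponents add to exactly $\tfrac12$ (and the $\varepsilon$'s cancel), multiplying by $x^{517/1648}$ reproduces precisely $\sqrt x(\log x)^{\eta-1}$, matching the bookkeeping pieces.

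The genuinely delicate case is $k=3$, which I expect to be the main obstacle. Here the best pointwise bound $P_3(y)\ll y^{21/32+\varepsilon}$ makes the exponent of $p$ equal to $\frac{21}{16}>1$, so the residual series converges and the direct estimate yields only $T\ll x^{21/32+\varepsilon}$, of larger order than the target $\sqrt x(\log x)^{\eta-1}$ since $\frac{21}{32}>\frac12$; note also that reducing $P_3$ to the circle problem is useless, because $\sum_{|a|\le\sqrt y}P_2(y-a^2)=P_3(y)+O(\sqrt y)$ merely recovers $P_3$. No pointwise bound can succeed, so the averaging over primes must be exploited. The plan is to apply partial summation to $\sum_{p\le\sqrt x}(\log p)^\eta P_3(x/p^2)$ and, after the substitution $u=x/t^2$, reduce it to $\sqrt x$ times a weighted integral $\int_1^{x/4}P_3(u)\,w(u)\,du$ with $w(u)\asymp(\log x)^{\eta-1}u^{-3/2}$. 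The quantity that then governs the estimate is the first Riesz mean
\[
\int_0^Y P_3(u)\,du=\sum_{n\le Y}r_3(n)(Y-n)-\tfrac25 V_3 Y^{5/2},
\]
whose cancellation makes it $\ll Y^{1+\varepsilon}$ (any exponent below $\frac32$ would suffice), far below the trivial $Y\cdot Y^{21/32}$; this smoothed bound reflects the oscillation of $P_3$ and can be obtained by Poisson summation applied to the regular function $\sum_{|m|^2\le Y}(Y-|m|^2)$. Integrating by parts against $w$ and inserting this bound shows $\int_1^{x/4}P_3(u)\,w(u)\,du\ll(\log x)^{\eta-1}$, whence $T\ll\sqrt x(\log x)^{\eta-1}$; the prime powers with $\nu\ge2$ contribute even less and are handled the same way. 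Trading the pointwise sphere estimate for the integrated one is, I expect, the crux of the whole argument.
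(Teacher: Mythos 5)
For $k=2$, $k=4$ and $k\ge 5$ your argument coincides with the paper's proof: it uses the same identity (Lemma \ref{Lemma_gcd}, i.e.\ the paper's \eqref{S_gen}), the same completion of the prime series with Chebyshev-type bounds, and the same insertion of the pointwise bounds for $P_k$; in particular your observation for $k=2$ that the exponents cancel exactly, $\vartheta+(1-2\vartheta)/2=1/2$ with $\vartheta=517/1648+\varepsilon$, is precisely the computation the paper carries out. Those cases are correct.

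The case $k=3$ is where your proposal has a genuine gap. The step ``apply partial summation to $\sum_{p\le\sqrt x}(\log p)^\eta P_3(x/p^2)$ and reduce it to $\sqrt x$ times a weighted integral $\int_1^{x/4}P_3(u)w(u)\,du$'' is not legitimate. Replacing a sum over primes by an integral of the summand is only valid when the summand varies slowly on the scale of the prime gaps; here $t\mapsto P_3(x/t^2)$ jumps by $r_3(n)$ at every point $t=\sqrt{x/n}$, so its total variation on $[2,\sqrt x]$ is of order $x^{3/2}$, and the Riemann--Stieltjes integration by parts needed to pass from $d\pi(t)$ to $dt/\log t$ produces a term bounded only by this variation times $\sup_{t\le\sqrt x}|\pi(t)-\operatorname{li}(t)|$, which dwarfs everything in sight. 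More decisively: your sum contains the single term $p=2$, $\nu=1$, contributing $(\log 2)^\eta P_3(x/4)$. This is one unaveraged value of the sphere error term, about which the smoothed bound $\int_0^Y P_3(u)\,du\ll Y^{1+\varepsilon}$ says nothing; the best known pointwise bound is $P_3(x/4)\ll x^{21/32+\varepsilon}$, and by the classical omega-result for the sphere problem (Szeg\H{o}), $P_3(y)=\Omega_{-}\bigl((y\log y)^{1/2}\bigr)$, this single term is infinitely often larger than your target $\sqrt x\,(\log x)^{\eta-1}$ whenever $\eta<3/2$. So no averaging over $p$ can rescue the argument: the target bound would require cancellation \emph{between} the values $P_3(x/p^{2\nu})$ for different prime powers, which is beyond any known method.

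That said, your instinct that $k=3$ is the crux is exactly right, and you should know that the paper does not prove this case either: it treats only $k=2$ in detail and dismisses $k=3$ as ``similar,'' but the similar computation, namely
\begin{equation*}
\sum_{\substack{p^\nu\le\sqrt x\\ \nu\in S}}(\log p)^\eta\Big(\frac{x}{p^{2\nu}}\Big)^{21/32+\varepsilon}
\ll x^{21/32+\varepsilon}\sum_p\frac{(\log p)^\eta}{p^{21/16}}\ll x^{21/32+\varepsilon},
\end{equation*}
where the prime series now \emph{converges} because $2\cdot 21/32>1$, yields only $T_3(x)\ll x^{21/32+\varepsilon}$, which is weaker than the stated $\sqrt x\,(\log x)^{\eta-1}$ since $21/32>1/2$. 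The honest conclusion of both your approach and the paper's is $T_3(x)\ll x^{21/32+\varepsilon}$; the error term claimed in the theorem for $k=3$ is not established by either argument.
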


In particular, we deduce
\begin{cor} Let $k\ge 2$, and let $f$ be one of the functions $\log n$, $\log \kappa(n)$, $\omega(n)$, $\Omega(n)$. Then
\begin{equation*} 
\sum_{\substack{n_1,\ldots,n_k\in \Z\\ n_1^2+\cdots +n_k^2\le x}} 
f((n_1,\ldots,n_k)) = K_{f,k} V_k x^{k/2} + T_k(x),
\end{equation*}
where
\begin{equation*} 
K_{\log,k} = \sum_p \frac{\log p }{p^k-1}, \quad K_{\log \kappa,k}= \sum_p \frac{\log p }{p^k}, \quad
K_{\omega,k}= \sum_p \frac1{p^k}, \quad K_{\Omega,k}= \sum_p \frac1{p^k-1},
\end{equation*}
and $T_k(x)$ is given in Theorem \ref{Th_omega_type}.
\end{cor}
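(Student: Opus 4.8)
The plan is to observe that each of the four functions listed is literally one of the functions $f_{S,\eta}$ to which Theorem \ref{Th_omega_type} applies, so the corollary follows by specializing that theorem to the appropriate pair $(S,\eta)$ and evaluating the resulting Euler-type constant in closed form.

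First I would record the four identifications already established in the discussion preceding Theorem \ref{Th_omega_type}: taking $S=\N$, $\eta=1$ gives $f_{\N,1}(n)=\sum_{p\mid n}\nu_p(n)\log p = \log n$; taking $S=\{1\}$, $\eta=1$ gives $f_{\{1\},1}(n)=\sum_{p\mid n}\log p = \log\kappa(n)$; taking $S=\{1\}$, $\eta=0$ gives $f_{\{1\},0}(n)=\sum_{p\mid n}1=\omega(n)$; and taking $S=\N$, $\eta=0$ gives $f_{\N,0}(n)=\sum_{p\mid n}\nu_p(n)=\Omega(n)$. In every case $1\in S\subseteq\N$ and $\eta\ge 0$, so the hypotheses of Theorem \ref{Th_omega_type} are satisfied and the error term is exactly the $T_k(x)$ stated there, uniformly across the four cases.

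It then remains to evaluate the main-term constant $\sum_p (\log p)^{\eta} H_{S,k}(p)$, where $H_{S,k}(p)=\sum_{\nu\in S} p^{-\nu k}$. For $S=\{1\}$ one has simply $H_{\{1\},k}(p)=p^{-k}$, which yields $\sum_p (\log p)\,p^{-k}=K_{\log\kappa,k}$ when $\eta=1$ and $\sum_p p^{-k}=K_{\omega,k}$ when $\eta=0$. For $S=\N$ the inner sum is a geometric series, $H_{\N,k}(p)=\sum_{\nu\ge 1}p^{-\nu k}=1/(p^k-1)$, giving $\sum_p (\log p)/(p^k-1)=K_{\log,k}$ when $\eta=1$ and $\sum_p 1/(p^k-1)=K_{\Omega,k}$ when $\eta=0$. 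Substituting these four constants into Theorem \ref{Th_omega_type} produces the four displayed formulas.

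Since the whole argument is a direct substitution, there is no genuine obstacle here; the only points requiring care are the bookkeeping of the four $(S,\eta)$ pairs and the summation of the geometric series in the case $S=\N$, both of which are elementary.
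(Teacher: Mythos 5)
Your proposal is correct and matches the paper's intended argument exactly: the paper states this corollary immediately after Theorem \ref{Th_omega_type} with no separate proof, precisely because it follows by specializing to the four pairs $(S,\eta)=(\N,1),(\{1\},1),(\{1\},0),(\N,0)$ identified in the preceding discussion and evaluating $H_{S,k}(p)$ (a geometric series $1/(p^k-1)$ for $S=\N$, and $p^{-k}$ for $S=\{1\}$). The only point worth noting is that for $k\in\{2,3\}$ the bound $T_k(x)\ll\sqrt{x}(\log x)^{\eta-1}$ depends on $\eta$, so the error term is inherited case by case rather than being literally uniform over the four functions, but this is exactly how the corollary's reference to Theorem \ref{Th_omega_type} is meant to be read.
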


\subsection{Estimates for functions of the LCM} \label{Sect_Estimates_LCM}

Given  a fixed positive real number $r$ let ${\cal A}_r$ denote the
class of multiplicative arithmetic functions $f:\N \to \C$
satisfying the following properties: there exist real constants
$C_1,C_2$ such that $|f(p)-p^r|\le C_1\, p^{r-1/2}$ for every prime $p$, and 
$|f(p^{\nu})|\le C_2\, p^{\nu r}$ for every prime power $p^\nu$ with $\nu \ge 2$.
This class of functions was defined by Hilberdink and T\'oth \cite{HilTot2016}. 

Observe that the functions $\id, \sigma, \varphi, \psi$ belong to the class ${\cal A}_1$.
Also, any bounded multiplicative function $f$ such that $f(p)=1$ for every prime $p$, in 
particular $f(n)=\mu^2(n)$ is in the class ${\cal A}_0$.

We prove the following results.

\begin{theorem} \label{Th_1_LCM} Let $k\ge 2$ be a fixed integer and let $f\in {\cal A}_1$ be a function. 
Then 
\begin{equation*} 
\sum_{\substack{n_1,\ldots,n_k\in \N\\ n_1^2+\cdots +n_k^2\le x}} 
f([n_1,\ldots,n_k]) = \frac{C_{f,k}}{2^k k!} x^k + O\left(x^{k-1/4 +\varepsilon}\right),
\end{equation*}
where 
\begin{equation*}
C_{f,k}= \prod_p \left(1-\frac1{p}\right)^k \sum_{\nu_1,\ldots,
\nu_k=0}^{\infty}
\frac{f(p^{\max(\nu_1,\ldots,\nu_k)})}{p^{2(\nu_1 +\cdots +\nu_k)}}.
\end{equation*}
\end{theorem}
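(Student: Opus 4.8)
The plan is to convert the LCM sum into a convolution over an ellipsoidal lattice‑point count and then invoke Lemma \ref{Lemma_prod_k}. Applying Lemma \ref{Lemma_HilTot} with $r=1$ gives the factorization $\sum_{n_1,\ldots,n_k} f([n_1,\ldots,n_k])\, n_1^{-z_1}\cdots n_k^{-z_k} = \zeta(z_1-1)\cdots\zeta(z_k-1)\,H_{f,k}(z_1,\ldots,z_k)$. Since $\zeta(z-1)$ is the Dirichlet series of $\id$, reading off coefficients yields the pointwise identity $f([n_1,\ldots,n_k]) = \sum_{d_1 e_1 = n_1,\ldots,d_k e_k = n_k} h_{f,k}(d_1,\ldots,d_k)\, e_1\cdots e_k$. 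First I would substitute this and interchange summations, writing $n_i = d_i e_i$, so that the spherical constraint becomes $\sum_i d_i^2 e_i^2 \le x$ and the sum factors as
\[
S(x) := \sum_{\sum_i n_i^2 \le x} f([n_1,\ldots,n_k]) = \sum_{d_1,\ldots,d_k} h_{f,k}(d_1,\ldots,d_k)\, G(x;d_1,\ldots,d_k),
\]
where $G(x;d_1,\ldots,d_k) := \sum_{\sum_i d_i^2 e_i^2 \le x} e_1\cdots e_k$ vanishes as soon as some $d_i > \sqrt{x}$.

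Next I would apply Lemma \ref{Lemma_prod_k} with $a_i = d_i^2$, which gives uniformly $G(x;d_1,\ldots,d_k) = \frac{x^k}{2^k k!\, d_1^2\cdots d_k^2} + O\bigl(\frac{x^{k-1/2}(d_1+\cdots+d_k)}{d_1^2\cdots d_k^2}\bigr)$. Summing the main term over all $d_i$ produces $\frac{x^k}{2^k k!}\sum_{d_1,\ldots,d_k} h_{f,k}(d_1,\ldots,d_k)/(d_1^2\cdots d_k^2) = \frac{x^k}{2^k k!}\,H_{f,k}(2,\ldots,2)$, the series converging since $2 > 3/2$. To identify this constant with $C_{f,k}$ I would compare Euler factors at $(2,\ldots,2)$ in Lemma \ref{Lemma_HilTot}: the local factor of $\prod_j\zeta(z_j-1)$ is $(1-1/p)^{-k}$, while that of the full LCM series is $\sum_{\nu_1,\ldots,\nu_k\ge 0} f(p^{\max(\nu_1,\ldots,\nu_k)})/p^{2(\nu_1+\cdots+\nu_k)}$ (using $[p^{\nu_1},\ldots,p^{\nu_k}]=p^{\max(\nu_1,\ldots,\nu_k)}$ and multiplicativity of $f$); hence the local factor of $H_{f,k}$ is exactly $(1-1/p)^k$ times the latter, whence $H_{f,k}(2,\ldots,2) = C_{f,k}$.

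The two remaining error contributions are both controlled by trading powers of $d_i$ against powers of $x$ using the convergence abscissa $3/2$ of $H_{f,k}$. For the tail of the main series (terms with $\max_i d_i > \sqrt{x}$, which must be discarded since $G$ vanishes there), on the range $d_1 > \sqrt{x}$ I would write $d_1^{-2} = d_1^{-3/2-\varepsilon}\, d_1^{-1/2+\varepsilon} \le d_1^{-3/2-\varepsilon}\, x^{-1/4+\varepsilon/2}$, so the tail is $\ll x^k\cdot x^{-1/4+\varepsilon/2}\sum_{d} |h_{f,k}|/(d_1^{3/2+\varepsilon} d_2^2\cdots d_k^2) \ll x^{k-1/4+\varepsilon}$ by absolute convergence of $H_{f,k}$ at $(3/2+\varepsilon,2,\ldots,2)$. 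For the accumulated error from Lemma \ref{Lemma_prod_k}, the $d_1$‑term is $x^{k-1/2}\sum_{d_1\le\sqrt{x},\, d_2,\ldots,d_k} |h_{f,k}|/(d_1\, d_2^2\cdots d_k^2)$; now $d_1^{-1} = d_1^{-3/2-\varepsilon}\, d_1^{1/2+\varepsilon} \le d_1^{-3/2-\varepsilon}\, x^{1/4+\varepsilon/2}$ on $d_1 \le \sqrt{x}$, giving $\ll x^{k-1/2}\, x^{1/4+\varepsilon/2} \ll x^{k-1/4+\varepsilon}$. By symmetry every $d_j$‑term is bounded the same way, so both errors match the claimed $O(x^{k-1/4+\varepsilon})$.

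The main obstacle is precisely this divergence at the natural exponent: the series $\sum h_{f,k}/(d_1^2\cdots d_k^2)$ converges (abscissa $3/2 < 2$), but the error bookkeeping forces sums with one exponent lowered to $1$, which lie outside the region of absolute convergence. The resolution, and the source of the exponent $1/4$, is that $G$ vanishes for $d_i > \sqrt{x}$, so each problematic sum is effectively truncated at $\sqrt{x}$; the gap $2-3/2 = 1/2$ between the true and critical exponents, halved by the truncation at $\sqrt{x}$, yields the saving $x^{1/4}$. I would also keep an eye on the uniformity in Lemma \ref{Lemma_prod_k}, which holds even when the ellipsoid is nearly empty (there the error term already dominates the main term), since this is what legitimises summing the per‑term error over the whole range $d_i \le \sqrt{x}$.
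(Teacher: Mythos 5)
Your proposal is correct and follows essentially the same route as the paper's proof: the convolutional identity $f([n_1,\ldots,n_k])=\sum_{j_id_i=n_i} j_1\cdots j_k\, h_{f,k}(d_1,\ldots,d_k)$ from Lemma \ref{Lemma_HilTot} with $r=1$, interchange of summation, Lemma \ref{Lemma_prod_k} with $a_i=d_i^2$, and the same exponent-shifting trick against the convergence abscissa $3/2$ (tail and per-term error each costing $x^{-1/4+\varepsilon}$ relative to the main term). Your explicit Euler-factor identification of $H_{f,k}(2,\ldots,2)$ with $C_{f,k}$ is a useful detail that the paper leaves implicit.
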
 

\begin{cor} \label{Cor_1_LCM} Let $k\ge 2$. Then
\begin{equation*}  
\sum_{\substack{n_1,\ldots,n_k\in \N\\ n_1^2+\cdots +n_k^2 \le x}} 
[n_1,\ldots,n_k] = \frac{C_k}{2^k k!} x^k + R_k(x),
\end{equation*}
\begin{equation*} 
\sum_{\substack{n_1,\ldots,n_k\in \Z\\ n_1^2+\cdots +n_k^2 \le x}} 
[n_1,\ldots,n_k] = \frac{C_k}{k!} x^k + R_k(x),
\end{equation*}
where $R_k(x) \ll x^{k-1/4 +\varepsilon}$ for $k\ge 3$, $R_2(x)\ll x^{3/2}\log x$, and  
\begin{equation*} 
C_k= \prod_p \left(1-\frac1{p}\right)^k \sum_{\nu_1,\ldots, \nu_k=0}^{\infty}
\frac1{p^{2(\nu_1 +\cdots +\nu_k)- \max(\nu_1,\ldots,\nu_k)}},
\end{equation*}
in particular, $C_2= \zeta(3)/\zeta(2)$, $C_3 =\zeta(3)\zeta(5) \prod_p \left(1-\frac{3}{p^2}
+\frac{4}{p^3} -\frac{3}{p^4}+\frac1{p^6}\right)$.
\end{cor}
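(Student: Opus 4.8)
The plan is to read off the natural-number estimate as the special case $f=\id$ of Theorem~\ref{Th_1_LCM}, to pass to the integers by the factor $2^k$, to strengthen the error term in the exceptional case $k=2$ by an elementary argument, and finally to evaluate the constants $C_2$ and $C_3$.

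As noted after the definition of ${\cal A}_r$, one has $\id\in{\cal A}_1$, so Theorem~\ref{Th_1_LCM} applies with $f=\id$. Here $\id(p^{\max(\nu_1,\ldots,\nu_k)})=p^{\max(\nu_1,\ldots,\nu_k)}$, so that
\[
C_{\id,k}=\prod_p\left(1-\frac1p\right)^k\sum_{\nu_1,\ldots,\nu_k=0}^{\infty}\frac{p^{\max(\nu_1,\ldots,\nu_k)}}{p^{2(\nu_1+\cdots+\nu_k)}}=C_k,
\]
which is exactly the constant in the statement. This yields the natural-number formula with error $O(x^{k-1/4+\varepsilon})$, giving the asserted $R_k(x)$ for all $k\ge3$. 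The integer formula then follows at once from the relation $\sum_{\Z}f([n_1,\ldots,n_k])=2^k\sum_{\N}f([n_1,\ldots,n_k])$ established in the preliminaries (valid since $\id(0)=0$): multiplying by $2^k$ turns $C_k/(2^k k!)$ into $C_k/k!$ and leaves the error of the same order.

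It remains to improve the error for $k=2$, where Theorem~\ref{Th_1_LCM} gives only $O(x^{7/4+\varepsilon})$, to the claimed $O(x^{3/2}\log x)$. Here I would argue directly. Writing $[n_1,n_2]=n_1n_2/(n_1,n_2)$ and setting $(n_1,n_2)=d$, $n_1=da$, $n_2=db$ with $(a,b)=1$, the condition $n_1^2+n_2^2\le x$ becomes $a^2+b^2\le x/d^2$ and $[n_1,n_2]=dab$, so that
\[
\sum_{\substack{n_1,n_2\in\N\\ n_1^2+n_2^2\le x}}[n_1,n_2]=\sum_{d\le\sqrt{x}}d\sum_{\substack{a,b\ge1,\ (a,b)=1\\ a^2+b^2\le x/d^2}}ab.
\]
Removing the coprimality condition by M\"obius inversion (writing $a=ea'$, $b=eb'$) and applying Lemma~\ref{Lemma_prod_k} with $k=2$, $a_1=a_2=1$ to the resulting inner product sum, the sum $\sum_{(a,b)=1,\,a^2+b^2\le y}ab$ has main term $y^2/(8\zeta(2))$ with remainder $O(y^{3/2}\log y)$, the logarithm arising from $\sum_{e\le\sqrt{y}}1/e$. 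Summing this over $d$ with the weight $d$ produces in the error the convergent series $\sum_d d\,(x/d^2)^{3/2}=x^{3/2}\sum_d d^{-2}$, hence $O(x^{3/2}\log x)$, while the main term assembles to $\bigl(\zeta(3)/(8\zeta(2))\bigr)x^2=\frac{C_2}{2^2\,2!}x^2$, in agreement with the general formula.

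The final task is the evaluation of the constants. Factoring $C_k$ into its $p$-local pieces and splitting the geometric sum according to whether the exponents $\nu_1,\ldots,\nu_k$ attain their maximum uniquely or with ties gives, for $k=2$, the local factor $(1-p^{-2})/(1-p^{-3})$, whence $C_2=\prod_p(1-p^{-2})/(1-p^{-3})=\zeta(3)/\zeta(2)$; the case $k=3$ is a longer but entirely analogous computation of the single $p$-factor, producing $C_3=\zeta(3)\zeta(5)\prod_p(1-3p^{-2}+4p^{-3}-3p^{-4}+p^{-6})$. The only genuine obstacle is the bookkeeping of error terms in the iterated summation for $k=2$: one must verify that the M\"obius step contributes only a single logarithm and that the subsequent summation over $d$ remains convergent in the error, which it does precisely because the weight $d$ is dominated by the $d^{-3}$ decay of $(x/d^2)^{3/2}$.
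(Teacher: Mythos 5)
Your proposal is correct, and it reaches the crucial $k=2$ refinement by a genuinely different route than the paper. The skeleton is shared: both you and the paper obtain the case $k\ge 3$ by taking $f=\id\in{\cal A}_1$ in Theorem~\ref{Th_1_LCM} (so $C_{\id,k}=C_k$) and pass to $\Z^k$ via the factor $2^k$, which you correctly justify by $\id(0)=0$. For $k=2$, however, the paper stays inside the machinery of Theorem~\ref{Th_1_LCM}: it first proves the Dirichlet series identity $\sum_{n_1,n_2\ge 1}[n_1,n_2]\,n_1^{-z_1}n_2^{-z_2}=\zeta(z_1-1)\zeta(z_2-1)\zeta(z_1+z_2-1)/\zeta(z_1+z_2-2)$, concludes that the kernel $h_{f,2}$ occurring in that proof is supported on the diagonal, namely $h_{f,2}(n,n)=n\prod_{p\mid n}(1-p)$ and $h_{f,2}(n_1,n_2)=0$ for $n_1\ne n_2$, and then re-runs the two error estimates of that proof with this explicit kernel; since $|\prod_{p\mid n}(1-p)|\le n$, the error sums collapse to the one-dimensional bounds $\sum_{d\le\sqrt{x}}d^{-1}\ll\log x$ and $\sum_{d>\sqrt{x}}d^{-2}\ll x^{-1/2}$, giving $O(x^{3/2}\log x)$. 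You instead argue from scratch with $[n_1,n_2]=n_1n_2/(n_1,n_2)$: the substitution $n_i=da_i$ with $(a_1,a_2)=1$, M\"obius removal of the coprimality, and Lemma~\ref{Lemma_prod_k}; your bookkeeping is sound (the single logarithm from $\sum_{e\le\sqrt{y}}1/e$, and the convergence of $\sum_d d\,(x/d^2)^{3/2}=x^{3/2}\sum_d d^{-2}$), and your main term $\zeta(3)x^2/(8\zeta(2))$ agrees with $C_2x^2/(2^2\cdot 2!)$. What each route buys: the paper's argument makes transparent \emph{why} $k=2$ admits the saving (the convolution kernel is diagonal) and reads off $C_2=\zeta(3)/\zeta(2)$ essentially for free by evaluating the series identity at $z_1=z_2=2$, whereas your argument is entirely elementary, independent of Lemma~\ref{Lemma_HilTot} and of Dirichlet series, and confirms the constant by direct computation. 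Finally, your evaluation of the local Euler factor of $C_2$ as $(1-p^{-2})/(1-p^{-3})$ by splitting on ties of the maximum is correct, and leaving $C_3$ as an analogous unwritten local computation puts you on par with the paper, which states but does not verify that constant either.
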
 

\begin{theorem} \label{Th_1_LCM__r_zero} Let $k\ge 2$ be a fixed integer and let $f\in {\cal A}_0$ be a function. 
Then 
\begin{equation*} 
\sum_{\substack{n_1,\ldots,n_k\in \N\\ n_1^2+\cdots +n_k^2\le x}} 
f([n_1,\ldots,n_k]) = \frac{V_k E_{f,k}}{2^k} x^{k/2} + O\left(x^{k/2-1/4 +\varepsilon}\right),
\end{equation*}
where $V_k$ is given by \eqref{V_k} and 
\begin{equation*} 
E_{f,k} = \prod_p \left(1-\frac1{p}\right)^k \sum_{\nu_1,\ldots, \nu_k=0}^{\infty}
\frac{f(p^{\max(\nu_1,\ldots,\nu_k)})}{p^{\nu_1 +\cdots +\nu_k}}.
\end{equation*}
\end{theorem}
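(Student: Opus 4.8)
The plan is to mirror the proof of Theorem~\ref{Th_1_LCM}, replacing the role of Lemma~\ref{Lemma_prod_k} by Lemma~\ref{Lemma_number_k}, since the hypothesis $f\in {\cal A}_0$ corresponds to the parameter value $r=0$ rather than $r=1$. First I would invoke Lemma~\ref{Lemma_HilTot} with $r=0$ to factor the multiple Dirichlet series as $\zeta(z_1)\cdots \zeta(z_k) H_{f,k}(z_1,\ldots,z_k)$, where $H_{f,k}$ is absolutely convergent for $\Re z_j>1/2$. Comparing Dirichlet coefficients via the convolution \eqref{convo_k} yields the pointwise identity
\begin{equation*}
f([n_1,\ldots,n_k]) = \sum_{d_1\mid n_1,\ldots,d_k\mid n_k} h_{f,k}(d_1,\ldots,d_k).
\end{equation*}
Substituting this and writing $n_j=d_jm_j$, the spherical sum becomes
\begin{equation*}
\sum_{\substack{n_1,\ldots,n_k\in \N\\ n_1^2+\cdots +n_k^2\le x}} f([n_1,\ldots,n_k]) = \sum_{d_1,\ldots,d_k} h_{f,k}(d_1,\ldots,d_k) \sum_{\substack{m_1,\ldots,m_k\in \N\\ d_1^2m_1^2+\cdots +d_k^2m_k^2\le x}} 1,
\end{equation*}
where the inner sum is a lattice-point count in an ellipsoid with coefficients $a_j=d_j^2$ and vanishes unless every $d_j\le \sqrt{x}$.

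Next I would apply Lemma~\ref{Lemma_number_k} with $a_j=d_j^2$ (so that $\sqrt{a_1\cdots a_k}=d_1\cdots d_k$ and $\sqrt{a_j}=d_j$) to the inner count, producing a main term $\frac{V_k x^{k/2}}{2^k d_1\cdots d_k}$ and an error $O\!\left(\frac{x^{(k-1)/2}(d_1+\cdots +d_k)}{d_1\cdots d_k}\right)$. Summing the main term over all $d_1,\ldots,d_k$ gives $\frac{V_k x^{k/2}}{2^k} H_{f,k}(1,\ldots,1)$, which makes sense since $1>1/2$. The Euler product for $H_{f,k}$, obtained by comparing local factors at each prime $p$ (using $[p^{\nu_1},\ldots,p^{\nu_k}]=p^{\max(\nu_1,\ldots,\nu_k)}$ and multiplicativity of $f$), evaluates at $z_1=\cdots =z_k=1$ to exactly $E_{f,k}$, yielding the claimed main term $\frac{V_k E_{f,k}}{2^k} x^{k/2}$.

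It remains to control two error contributions, and this is where the delicate point lies. The first is the tail correction from extending the main-term summation to all $d$: for indices with some $d_j>\sqrt{x}$ one splits $1/d_j = d_j^{-1/2-\varepsilon}\cdot d_j^{-1/2+\varepsilon}$ and uses $d_j^{-1/2+\varepsilon}\le x^{-1/4+\varepsilon/2}$, leaving a sum that converges by the absolute convergence of $H_{f,k}$ on $\Re z_j>1/2$; this contributes $O(x^{k/2-1/4+\varepsilon})$. The second is the accumulated lattice-point error: for the summand carrying $d_j$ in the numerator the factor $d_j$ cancels the corresponding denominator, and one bounds the residual sum over $d_j\le \sqrt{x}$ by inserting $1=d_j^{1/2+\varepsilon}\cdot d_j^{-1/2-\varepsilon}$ with $d_j^{1/2+\varepsilon}\le x^{1/4+\varepsilon/2}$, again reducing to an absolutely convergent sum. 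The main obstacle is verifying that these two bounds align: the lattice error contributes $x^{(k-1)/2}\cdot x^{1/4+\varepsilon/2}$, and since $(k-1)/2+1/4=k/2-1/4$ this matches the tail correction and the stated remainder $O(x^{k/2-1/4+\varepsilon})$. The whole scheme works precisely because absolute convergence is available on $\Re z_j>1/2$, so the threshold $d_j\le \sqrt{x}$ produces the half-power $x^{1/4}$ saving in both places.
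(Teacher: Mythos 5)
Your proposal is correct and follows essentially the same route as the paper: the paper's proof likewise invokes Lemma~\ref{Lemma_HilTot} with $r=0$ to obtain the convolution identity, reduces the spherical sum to ellipsoid lattice-point counts handled by Lemma~\ref{Lemma_number_k}, and then carries out exactly the tail/lattice-error splitting you describe (imported from the proof of Theorem~\ref{Th_1_LCM}, with the exponents shifted from $r=1$ to $r=0$). Your write-up in fact supplies the details that the paper compresses into ``Similar to the proof of Theorem~\ref{Th_1_LCM} \dots and use Lemma~\ref{Lemma_number_k}.''
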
 

\begin{cor} Let $k\ge 2$. Then 
\begin{equation*} 
\sum_{\substack{n_1,\ldots,n_k\in \N\\ n_1^2+\cdots +n_k^2\le x}} 
\mu^2([n_1,\ldots,n_k]) = \frac{V_k}{(2\zeta(2))^k} x^{k/2} + O\left(x^{k/2-1/4 +\varepsilon}\right).
\end{equation*}
\end{cor}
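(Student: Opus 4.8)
The plan is to apply Theorem \ref{Th_1_LCM__r_zero} directly with $f=\mu^2$, so that the entire task reduces to checking membership in the class ${\cal A}_0$ and evaluating the resulting constant $E_{\mu^2,k}$. As already observed in the text, $\mu^2$ is multiplicative with $\mu^2(p)=1=p^0$ for every prime $p$ and $\mu^2(p^\nu)=0$ for every $\nu\ge 2$; hence the defining inequalities $|\mu^2(p)-p^0|\le C_1 p^{-1/2}$ and $|\mu^2(p^\nu)|\le C_2 p^0$ hold trivially (with $r=0$), so $\mu^2\in {\cal A}_0$. Theorem \ref{Th_1_LCM__r_zero} then yields the asymptotic formula with main term $\frac{V_k E_{\mu^2,k}}{2^k}\, x^{k/2}$ and error term $O(x^{k/2-1/4+\varepsilon})$, which is exactly the shape claimed.

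It remains to compute $E_{\mu^2,k}$. The key observation is that $\mu^2(p^{\max(\nu_1,\ldots,\nu_k)})=1$ precisely when $\max(\nu_1,\ldots,\nu_k)\le 1$, i.e.\ when every $\nu_i\in\{0,1\}$, and it equals $0$ otherwise. Therefore the inner sum in the definition of $E_{\mu^2,k}$ collapses to a sum over $(\nu_1,\ldots,\nu_k)\in\{0,1\}^k$, which factorizes across the coordinates:
\begin{equation*}
\sum_{\nu_1,\ldots,\nu_k=0}^{\infty} \frac{\mu^2(p^{\max(\nu_1,\ldots,\nu_k)})}{p^{\nu_1+\cdots+\nu_k}} = \prod_{i=1}^{k}\Big(1+\frac1{p}\Big) = \Big(1+\frac1{p}\Big)^k.
\end{equation*}

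Substituting this into the Euler product and combining the two factors at each prime gives
\begin{equation*}
E_{\mu^2,k} = \prod_p \Big(1-\frac1{p}\Big)^k \Big(1+\frac1{p}\Big)^k = \prod_p \Big(1-\frac1{p^2}\Big)^k = \frac1{\zeta(2)^k},
\end{equation*}
where the last equality uses the Euler product $\prod_p (1-p^{-2})=1/\zeta(2)$. Hence $\frac{V_k E_{\mu^2,k}}{2^k} = \frac{V_k}{(2\zeta(2))^k}$, which is the stated main term, completing the proof. I expect no genuine obstacle here: the argument is entirely mechanical once Theorem \ref{Th_1_LCM__r_zero} is in hand, the only mild point being the recognition that requiring $\mu^2(p^{\max(\nu_1,\ldots,\nu_k)})$ to be nonzero forces all exponents into $\{0,1\}$ and thereby decouples the multiple sum.
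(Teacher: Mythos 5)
Your proof is correct and takes exactly the route the paper intends: the paper offers no separate argument for this corollary beyond its remark that $\mu^2\in{\cal A}_0$, so the content is precisely your application of Theorem \ref{Th_1_LCM__r_zero} together with the evaluation $E_{\mu^2,k}=\prod_p(1-1/p)^k(1+1/p)^k=\zeta(2)^{-k}$. Your observation that $\mu^2(p^{\max(\nu_1,\ldots,\nu_k)})\ne 0$ forces all $\nu_i\in\{0,1\}$, so the multiple sum factorizes, is the right (and only) computation needed.
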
 

\section{Proofs}

\subsection{Proofs of the lemmas}

\begin{proof}[Proof of Lemma {\rm \ref{Lemma_gcd}}] We have by the convolutional identity $f= \1*(\mu *f)$,
\begin{equation*}
\sum_{\substack{n_1,\ldots,n_k\in \Z\\ n_1^2+\cdots +n_k^2=n}} f((n_1,\ldots,n_k)) =  
\sum_{\substack{n_1,\ldots,n_k\in \Z\\ n_1^2+\cdots +n_k^2=n}} \sum_{d\mid (n_1,\ldots,n_k)} (\mu*f)(d) 
= \sum_{\substack{a_1,\ldots,a_k\in \Z\\ d^2(a_1^2+\cdots +a_k^2)=n}} (\mu*f)(d)
\end{equation*}
\begin{equation*}
 = \sum_{d^2\mid n} (\mu*f)(d) \sum_{\substack{a_1,\ldots,a_k\in \Z\\ a_1^2+\cdots +a_k^2=n/d^2}} 1 = \sum_{d^2\mid n} (\mu*f)(d) r_k(n/d^2).
\end{equation*}
\end{proof}

\begin{proof}[Proof of Lemma {\rm \ref{Lemma_number_k}}] By induction on $k$. Estimate \eqref{number_estimate_k} is true for $k=1$:  
\begin{equation*}
\sum_{\substack{n_1\in \N \\ a_1 n_1^2 \le x}} 1 = \sum_{1\le\ n_1 \le \sqrt{x/a_1}} 1 =
\sqrt{\frac{x}{a_1}}+  O(1).
\end{equation*}

Assume that \eqref{number_estimate_k} is true for $k-1$, where $k\ge 2$. Then
\begin{equation*} 
\sum_{\substack{n_1,\ldots,n_k\in \N \\ a_1n_1^2+\cdots +a_kn_k^2 \le x}} 
1 = \sum_{1\le n_k\le \sqrt{x/a_k}} \sum_{\substack{n_1,\ldots,n_{k-1}\in \N\\ a_1n_1^2+\cdots +a_{k-1}n_{k-1}^2 \le x-a_kn_k^2}} 1 
\end{equation*}
\begin{equation*} 
= \sum_{1\le n_k\le \sqrt{x/a_k}}  \left(\frac{V_{k-1}(x-a_kn_k^2)^{(k-1)/2}}{2^{k-1} \sqrt{a_1\cdots a_{k-1}}} + O\left(\frac{(x-a_kn_k^2)^{(k-2)/2}(\sqrt{a_1}+\cdots+\sqrt{a_{k-1}})}{\sqrt{a_1\cdots a_{k-1}}} \right) \right)
\end{equation*}
\begin{equation} \label{first_term}
= \frac{V_{k-1}a_k^{(k-1)/2}}{2^{k-1} \sqrt{a_1\cdots a_{k-1}}} \sum_{1\le n_k\le \sqrt{x/a_k}} (x/a_k - n_k^2)^{(k-1)/2} 
\end{equation}
\begin{equation*}
+ O\left(\frac{x^{(k-2)/2}(\sqrt{a_1}+\cdots+\sqrt{a_{k-1}})}{\sqrt{a_1\cdots a_{k-1}}} \sum_{1\le n_k\le \sqrt{x/a_k}} 
1 \right).
\end{equation*}

Here the error term is 
\begin{equation*} 
\ll \frac{x^{(k-2)/2}(\sqrt{a_1}+\cdots+\sqrt{a_{k-1}})}{\sqrt{a_1\cdots a_{k-1}}} \Big(\frac{x}{a_k}\Big)^{1/2} = 
\frac{x^{(k-1)/2}(\sqrt{a_1}+\cdots+\sqrt{a_{k-1}})}{\sqrt{a_1\cdots a_{k-1}a_k}}.
\end{equation*}

To estimate the main term we use Euler's summation formula from Lemma \ref{Lemma_Euler_sum}.
By choosing the function $\psi(t)= (x^2-t^2)^{(k-1)/2}$ we deduce for $k\ge 2$ that
\begin{equation*}
\sum_{1\le n \le x} (x^2-n^2)^{(k-1)/2} = \int_0^x (x^2-t^2)^{(k-1)/2}\, dt + O(x^{k-1}),     
\end{equation*}
where 
\begin{equation*}
\int_0^x (x^2-t^2)^{(k-1)/2}\, dt = x^{k} I_k
\end{equation*}
with
\begin{equation*}
I_k:=  \int_0^{\pi/2} (\cos t)^k \, dt = 
\begin{cases}
\frac{(2m-1)!!}{(2m)!!}\cdot \frac{\pi}{2}, & \text {if $k=2m$},\\
\frac{(2m)!!}{(2m+1)!!}, & \text {if $k=2m+1$}.
\end{cases}
\end{equation*}

Hence the main term in \eqref{first_term} is 
\begin{equation*} 
\frac{V_{k-1}a_k^{(k-1)/2}}{2^{k-1} \sqrt{a_1\cdots a_{k-1}}} \left(I_k (x/a_k)^{k/2} + O((x/a_k)^{(k-1)/2}) \right) 
\end{equation*}
\begin{equation*} 
= \frac{V_{k-1}I_k x^{k/2}}{2^{k-1} \sqrt{a_1\cdots a_{k-1}a_k}} + O\left(\frac{x^{(k-1)/2}}{\sqrt{a_1\cdots a_{k-1}}} \right). 
\end{equation*}

Here $V_{k-1} I_k= V_k/2$, and the proof is complete.
\end{proof}

\begin{proof}[Proof of Lemma {\rm \ref{Lemma_prod_k}}] By induction on $k$, similar to the proof of Lemma \ref{Lemma_number_k}, but here we only
need the familiar formula
\begin{equation*}
\sum_{1\le n\le x} n^k = \frac{x^{k+1}}{k+1} +O(x^k) \quad (k\in \N).
\end{equation*}

Estimate \eqref{product_estimate_k} is true for $k=1$:  
\begin{equation*}
\sum_{\substack{n_1\in \N \\ a_1 n_1^2 \le x}}  n_1 = \sum_{1\le\ n_1 \le \sqrt{x/a_1}} n_1 =
\frac{x}{2a_1}+ O\left(\sqrt{\frac{x}{a_1}}\, \right).
\end{equation*}

Assume that \eqref{product_estimate_k} is true for $k-1$, where $k\ge 2$. Then
\begin{equation*} 
\sum_{\substack{n_1,\ldots,n_k\in \N \\ a_1n_1^2+\cdots +a_kn_k^2 \le x}} 
n_1\cdots n_k = \sum_{1\le n_k\le \sqrt{x/a_k}} n_k
\sum_{\substack{n_1,\ldots,n_{k-1}\in \N\\ a_1n_1^2+\cdots +a_{k-1}n_{k-1}^2 \le x-a_kn_k^2}} 
n_1\cdots n_{k-1} 
\end{equation*}
\begin{equation*} 
= \sum_{1\le n_k\le \sqrt{x/a_k}} n_k  \left( \frac{(x-a_kn_k^2)^{k-1}}{2^{k-1} (k-1)! a_1\cdots a_{k-1}} + O\left(\frac{(x-a_kn_k^2)^{k-3/2}(\sqrt{a_1}+\cdots+\sqrt{a_{k-1}})}{a_1\cdots a_{k-1}} \right) \right)
\end{equation*}
\begin{equation} \label{first_term_1}
= \frac1{2^{k-1} (k-1)! a_1\cdots a_{k-1}} \sum_{1\le n_k\le \sqrt{x/a_k}} n_k (x-a_kn_k^2)^{k-1} 
\end{equation}
\begin{equation*}
+ O\left(\frac{x^{k-3/2}(\sqrt{a_1}+\cdots+\sqrt{a_{k-1}})}{a_1\cdots a_{k-1}} \sum_{1\le n_k\le \sqrt{x/a_k}} 
n_k \right).
\end{equation*}

Here the error term is 
\begin{equation*} 
\ll \frac{x^{k-3/2}(\sqrt{a_1}+\cdots+\sqrt{a_{k-1}})}{a_1\cdots a_{k-1}} \frac{x}{a_k} = 
\frac{x^{k-1/2}(\sqrt{a_1}+\cdots+\sqrt{a_{k-1}})}{a_1\cdots a_{k-1}a_k}, 
\end{equation*}
and for the sum $S$ in the main term we have
\begin{equation*} 
S:= \sum_{1\le n_k\le \sqrt{x/a_k}} n_k (x-a_kn_k^2)^{k-1} = \sum_{1\le n_k\le \sqrt{x/a_k}} n_k \sum_{j=0}^{k-1} (-1)^j 
\binom{k-1}{j} x^{k-1-j} a_k^j n_k^{2j}
\end{equation*}
\begin{equation*} 
= \sum_{j=0}^{k-1} (-1)^j \binom{k-1}{j} x^{k-1-j} a_k^j \sum_{1\le n_k\le \sqrt{x/a_k}} n_k^{2j+1}
\end{equation*}
\begin{equation*} 
= \sum_{j=0}^{k-1} (-1)^j \binom{k-1}{j} x^{k-1-j} a_k^j \left( \frac{(x/a_k)^{j+1}}{2j+2} +O((x/a_k)^{j+1/2})
\right)
\end{equation*}
\begin{equation*} 
= \frac{x^k}{2a_k} A_k + O\left(x^{k-1/2}/\sqrt{a_k}\right),
\end{equation*}
where
\begin{equation*} 
A_k: = \sum_{j=0}^{k-1} \frac{(-1)^j}{j+1} \binom{k-1}{j}= \frac1{k}.
\end{equation*}

This gives that the term in \eqref{first_term_1} is
\begin{equation*} 
\frac{x^k}{2^k k! a_1\cdots a_k} + O\left(\frac{x^{k-1/2} \sqrt{a_k}}{a_1\cdots a_k}\right),
\end{equation*}
showing that \eqref{product_estimate_k} is true for $k$.
\end{proof}

\subsection{Proof of the generalization of Wintner's theorem} 

\begin{proof}[Proof of Theorem {\rm \ref{Th_Wintner_gen}}]

We have by the identity $F=\1* (\mu*F)$, here with functions of $k$ variables, where $\mu(n_1,\ldots,n_k)=\mu(n_1)\cdots \mu(n_k)$,
\begin{equation*}
S_{F,k}(x):=\sum_{\substack{n_1,\ldots,n_k\in \N\\ n_1^2+\cdots +n_k^2\le x}} F(n_1,\ldots,n_k)  
= \sum_{\substack{n_1,\ldots,n_k\in \N\\ n_1^2+\cdots +n_k^2\le x}} \sum_{d_1\mid n_1,\ldots, d_k\mid n_k} (\mu*F)(d_1,\ldots,d_k) 
\end{equation*}
\begin{equation*}
= \sum_{\substack{d_1,a_1\ldots,d_k,a_k\in \N\\ d_1^2a_1^2+\cdots +d_k^2a_k^2\le x}} (\mu*F)(d_1,\ldots,d_k) 
= \sum_{\substack{d_1,\ldots,d_k\in \N\\ d_1,\ldots,d_k\le \sqrt{x}}} (\mu*F)(d_1,\ldots,d_k) 
\sum_{\substack{a_1,\ldots,a_k\in \N\\ d_1^2a_1^2+\cdots +d_k^2a_k^2\le x}} 1.
\end{equation*}

By using Lemma \ref{Lemma_number_k} we deduce 
\begin{equation*}
S_{F,k}(x)=  \sum_{d_1,\ldots,d_k\le \sqrt{x}} (\mu*F)(d_1,\ldots,d_k) \left(
\frac{V_k x^{k/2}}{2^k d_1\cdots d_k} + O\left(\frac{x^{(k-1)/2}(d_1+\cdots+d_k)}{d_1\cdots d_k}\right)\right)
\end{equation*}
\begin{equation*}
= \frac{V_kx^{k/2}}{2^k} \sum_{d_1,\ldots,d_k\le \sqrt{x}} \frac{(\mu*F)(d_1,\ldots,d_k)}{d_1\cdots d_k} + R_{F,k}(x),
\end{equation*}
where
\begin{equation*}
R_{F,k}(x)\ll
x^{(k-1)/2} \sum_{d_1,\ldots,d_k\le \sqrt{x}} \frac{|(\mu*F)(d_1,\ldots,d_k)| (d_1+\cdots+d_k)}{d_1\cdots d_k},
\end{equation*}
and
\begin{equation} \label{estimate_error_j}
\frac{R_{F,k}(x)}{x^{k/2}} \ll \frac1{\sqrt{x}} \sum_{j=1}^k \sum_{d_1,\ldots,d_k\le \sqrt{x}} 
\frac{|(\mu*F)(d_1,\ldots,d_k)| d_j}{d_1\cdots d_k}.
\end{equation}

i) Assume that the series
\begin{equation*}
\sum_{d_1,\ldots,d_k=1}^{\infty} \frac{|(\mu*F)(d_1,\ldots,d_k)|}{d_1\cdots d_k}
\end{equation*}
is convergent. Then for a small $\varepsilon>0$ split the inner sum in \eqref{estimate_error_j} for $j=1$ (and similarly for every $j$) in two parts:
\begin{equation*} 
\frac1{\sqrt{x}} \sum_{d_1,\ldots,d_k\le \sqrt{x}}  \frac{|(\mu*F)(d_1,\ldots,d_k)| d_1}{d_1\cdots d_k}
\end{equation*}
\begin{equation*} 
= \frac1{\sqrt{x}} \sum_{\substack{d_1\le \varepsilon \sqrt{x}\\ d_2,\ldots,d_k\le \sqrt{x}}}  \frac{|(\mu*F)(d_1,\ldots,d_k)| d_1}{d_1\cdots d_k}
+ \frac1{\sqrt{x}} \sum_{\substack{\varepsilon \sqrt{x}<d_1\le \sqrt{x} \\ d_2,\ldots,d_k\le \sqrt{x}}}  \frac{|(\mu*F)(d_1,\ldots,d_k)| d_1}{d_1\cdots d_k}
\end{equation*}
\begin{equation*} 
\le \varepsilon \sum_{d_1,\ldots,d_k=1}^{\infty} \frac{|(\mu*F)(d_1,\ldots,d_k)|}{d_1\cdots d_k}
+ \sum_{\substack{\varepsilon \sqrt{x}< d_1 \\ d_2,\ldots,d_k\ge 1}}  \frac{|(\mu*F)(d_1,\ldots,d_k)|}{d_1\cdots d_k}.
\end{equation*}

Here the first term is arbitrary small if $\varepsilon$ is small, and the second term is also arbitrary small if $x$ is large enough (by the definition of multiple convergent series). Hence $R_{F,k}(x)/x^{k/2} \to 0$, as $x\to \infty$. This proves the first part of the theorem.

ii) Assume that $0<t<1$, that is, the series
\begin{equation*}
\sum_{n_1,\ldots,n_k=1}^{\infty} \frac{(\mu*F)(n_1,\ldots,n_k)}{n_1^{z_1}\cdots n_k^{z_k}}
\end{equation*}
is absolutely convergent for $\Re z_j\ge t$ ($1\le j\le k$). Then 
\begin{equation*}
S_{F,k}(x) =  \frac{V_kx^{k/2}}{2^k} \sum_{d_1,\ldots,d_k=1}^{\infty} \frac{(\mu*F)(d_1,\ldots,d_k)}{d_1\cdots d_k}
+O\left( x^{k/2} \sideset{}{'} \sum_{d_1,\ldots,d_k} \frac{|(\mu*F)(d_1,\ldots,d_k)|}{d_1\cdots d_k} \right) 
\end{equation*}
\begin{equation} \label{next_error}
+ O\left( x^{(k-1)/2} \sum_{d_1,\ldots,d_k\le \sqrt{x}} \frac{|(\mu*F)(d_1,\ldots,d_k)|(d_1+\cdots+d_k)}{d_1\cdots d_k}
\right)
\end{equation}
where $\sum^{'}$ means that $d_1,\ldots,d_k\le \sqrt{x}$ does not hold. That is, there exists at least one $m$ ($1\le m\leq k$) 
such that $d_m>\sqrt{x}$. Without loss of generality, we can suppose that $m=1$.

We obtain that
\begin{equation*}
\sideset{}{'}  \sum_{\substack{d_1,\ldots,d_k\\ d_1>\sqrt{x}}} \frac{|(\mu*F)(d_1,\ldots,d_k)|}{d_1\cdots d_k} 
\le \sideset{}{'} \sum_{\substack{d_1,\ldots,d_k\\ d_1>\sqrt{x}}} \frac{|(\mu*F)(d_1,\ldots,d_k)|}{d_1\cdots d_k} 
\left(\frac{d_1}{\sqrt{x}}\right)^{1-t} 
\end{equation*}
\begin{equation*}
\le x^{(t-1)/2} \sum_{d_1,\ldots,d_k=1}^{\infty} \frac{|(\mu*F)(d_1,\ldots,d_k)|}{d_1^t d_2 
\cdots d_k} \ll x^{(t-1)/2}, 
\end{equation*}
since the latter series converges. This gives the error $O(x^{(k-1+t)/2})$.

For the error term in \eqref{next_error} by taking $d_1$ in the numerator (similarly for $d_2,\ldots,d_k$),
\begin{equation*}
\sum_{d_1,\ldots,d_k\le \sqrt{x}} \frac{|(\mu*F)(d_1,\ldots,d_k)|d_1}{d_1\cdots d_k}
= \sum_{d_1,\ldots,d_k\le \sqrt{x}} \frac{|(\mu*F)(d_1,\ldots,d_k)|d_1^t}{d_1^t d_2\cdots d_k}  
\end{equation*}
\begin{equation*}
\le x^{t/2} \sum_{d_1,\ldots,d_k=1}^{\infty} \frac{|(\mu*F)(d_1,\ldots,d_k)|}{d_1^t d_2\cdots d_k}  \ll x^{t/2},
\end{equation*}
the latter series (the same as above) being convergent. This gives the same error, namely $O(x^{(k-1+t)/2})$, and completes the proof.
\end{proof}

\subsection{Proofs of the results for functions of the GCD}

According to \eqref{id_gcd_sum} and \eqref{number_lattice_points_k}, for every arithmetic function $f$, with $f(0)=0$ one has
\begin{equation*}
S_{f,k}(x):=\sum_{\substack{n_1,\ldots,n_k\in \Z\\ n_1^2+\cdots +n_k^2\le x}} f((n_1,\ldots,n_k)) = \sum_{d^2e\le x} (\mu*f)(d) r_k(e) 
= \sum_{d\le \sqrt{x}} (\mu*f)(d) \sum_{e\le x/d^2} r_k(e) 
\end{equation*}
\begin{equation} \label{S_gen}
= V_kx^{k/2} \sum_{d\le \sqrt{x}} \frac{(\mu*f)(d)}{d^k} + \sum_{d\le \sqrt{x}} (\mu*f)(d) P_k(x/d^2).  
\end{equation}

\begin{proof}[Proof of Theorem {\rm \ref{Th_main_1}}] 

Assume that $f=g*\1$, that is, $f(n)=\sum_{d\mid n} g(d)$ ($n\in \N$), where $g$ is a bounded function with 
$|g(n)|\le K$ ($n\in \N$). Then $\mu * f = \mu* g * \1=g$. Hence $|(\mu*f)(n)|\le K$ for every 
$n\in \N$, and the series
\begin{equation*}
\sum_{n=1}^{\infty} \frac{(\mu*f)(n)}{n^k} = \sum_{n=1}^{\infty} \frac{g(n)}{n^k}     
\end{equation*}
is absolutely convergent for every $k\ge 2$. From \eqref{S_gen} we obtain
\begin{equation*}
 S_{f,k}(x)= V_kx^{k/2} \sum_{d=1}^{\infty} \frac{g(d)}{d^k} + O\left(x^{k/2}\sum_{d>\sqrt{x}} \frac1{d^k} \right) + 
 O\left( \sum_{d\le \sqrt{x}} P_k(x/d^2)\right). 
\end{equation*}

Here the first error term is $O(\sqrt{x})$ and using the known estimates for $P_k(x)$ given in Section 
\ref{Sect_Known_estimates} we obtain the indicated error terms by usual elementary estimates. 
\end{proof}

\begin{proof}[Proof of Corollary {\rm \ref{Cor_tau}}] 
For the second part, namely for summation over the natural numbers, use \eqref{N_Z} and the estimate $\sum_{n\le x} \tau(n) = x\log x+ O(x)$ (this is sufficient).
\end{proof}

\begin{proof}[Proof of Theorem {\rm \ref{Th_main_id}}] 

Now let $f=g*\id$, where $g$ is a bounded function with $|g(n)|\le K$ ($n\in \N$). Then $\mu * f = \mu* g *\id = g*\varphi$ and $|(\mu*f)(n)|\le
\sum_{d\mid n} \varphi(d)|g(n/d)| \le K \sum_{d\mid n} \varphi(d) =K n$ for every $n\in \N$. This shows that for $k\ge 3$ the series
\begin{equation*}
\sum_{n=1}^{\infty} \frac{(\mu*f)(n)}{n^k} = \sum_{n=1}^{\infty} \frac{(g*\varphi)(n)}{n^k} = \frac{\zeta(k-1)}{\zeta(k)} \sum_{n=1}^{\infty} \frac{g(n)}{n^k}   
\end{equation*}
is absolutely convergent, and from \eqref{S_gen} we obtain
\begin{equation*}
 S_{f,k}(x)= V_kx^{k/2} \frac{\zeta(k-1)}{\zeta(k)} \sum_{n=1}^{\infty} \frac{g(n)}{n^k} + O\left(x^{k/2}\sum_{d>\sqrt{x}} \frac1{d^{k-1}} \right) + 
 O\left( \sum_{d\le \sqrt{x}} d P_k(x/d^2)\right). 
\end{equation*}

Here the first error term is $O(x)$ and use the known estimates for $P_k(x)$ given in Section 
\ref{Sect_Known_estimates}. This proves \eqref{form_k_3}.

If $k=2$, then 
\begin{equation*}
S_{f,2}(x)
= V_2 x \sum_{d\le \sqrt{x}} \frac{(g*\varphi)(d)}{d^2} + O\left(\sum_{d\le \sqrt{x}} d P_2(x/d^2)\right).  
\end{equation*}

Using the known estimate
\begin{equation*}
\sum_{n\le x} \frac{\varphi(n)}{n^2} = \frac{6}{\pi^2} \log x  + O(1) 
\end{equation*}
(sufficient here in this form) we deduce
\begin{equation*}
\sum_{n\le x} \frac{(g*\varphi)(n)}{n^2} = \sum_{d\le x} \frac{g(d)}{d^2}\sum_{e\le x/d} \frac{\varphi(e)}{e^2}
=\frac{6}{\pi^2} \left(\sum_{d=1}^\infty \frac{g(d)}{d^2}\right) \log x + O(1),
\end{equation*}
which leads, together with $V_2=\pi$ and $P_2(x)\ll x^{517/1648+\varepsilon}$ to formula \eqref{form_2}.
\end{proof}

\begin{proof}[Proof of Corollary {\rm \ref{Cor_g_id}}] 

Follows by \eqref{N_Z} and the estimate 
\begin{equation*}
\sum_{n\le x} (g*\id)(n)= \frac{x^2}{2} \sum_{n=1}^{\infty} \frac{g(n)}{n^2}+ O(x\log x),
\end{equation*}
valid for every bounded function $g$.
\end{proof}

\begin{proof}[Proof of Theorem {\rm \ref{Th_omega_type}}] 
For the function $f_{S,\eta}$ we have by \eqref{f_S_eta_cond},
\begin{equation*} 
\sum_{d\le \sqrt{x}} \frac{(\mu*f_{S,\eta})(d)}{d^k} = \sum_{\substack{p^\nu \le \sqrt{x}\\ \nu \in S}} \frac{(\log p)^{\eta}}{p^{k\nu}} = \sum_{p\le \sqrt{x}}
(\log p)^{\eta} \sum_{\substack{1\le \nu \le m\\ \nu \in S}} \frac1{p^{k\nu}} 
\end{equation*}
\begin{equation} \label{A}
= \sum_{p\le \sqrt{x}} (\log p)^{\eta} \left( H_{S,k}(p) - \sum_{\substack{ \nu \ge m+1\\ \nu \in S}} \frac1{p^{k\nu}}\right),
\end{equation}
where $m=:\lfloor \frac{\log x}{2\log p}\rfloor$, and for every prime $p$,
\begin{equation} \label{H_S_k}
\frac1{p^k}\le H_{S,k}(p):=  \sum_{\nu \in S} \frac1{p^{k\nu}}\le \sum_{\nu=1}^{\infty} \frac1{p^{k\nu}} = \frac1{p^k-1},
\end{equation}
using that $1\in S$. Here
\begin{equation} \label{A_A}
\sum_{p\le \sqrt{x}} (\log p)^{\eta} H_{S,k}(p) = \sum_p (\log p)^{\eta} H_{S,k}(p) - \sum_{p>\sqrt{x}} (\log p)^{\eta} H_{S,k}(p),
\end{equation}
where the series is absolutely convergent by \eqref{H_S_k}, and the last sum is
\begin{equation*}
\ll  \sum_{p>\sqrt{x}} \frac{(\log p)^{\eta}}{p^k-1} \ll  \sum_{p>\sqrt{x}} \frac{(\log p)^{\eta}}{p^k} \ll \frac{(\log x)^{\eta-1}}{x^{(k-1)/2}},
\end{equation*}
see \cite[Lemma\, 3.3]{HeyTot2021}. Also,
\begin{equation} \label{A_1}
A_1:= \sum_{p\le \sqrt{x}}  (\log p)^{\eta} \sum_{\substack{\nu \ge m+1 \\ \nu \in S}} \frac1{p^{k\nu}}
\ll \sum_{p\le \sqrt{x}} (\log p)^\eta \sum_{\nu \ge m+1} \frac1{p^{k\nu}}
=  \sum_{p\le \sqrt{x}} \frac{(\log p)^\eta}{p^{km}(p^k-1)}.
\end{equation}

By the definition of $m$ we have $m> \frac{\log x}{2\log p}-1$, hence $p^{km}>\frac{x^{k/2}}{p^k}$. Thus the last sum in \eqref{A_1} is
\begin{equation*} 
\le  \frac1{x^{k/2}} \sum_{p\le \sqrt{x}} \frac{p^k(\log p)^\eta}{p^k-1}\ll  \frac1{x^{k/2}} \sum_{p\le \sqrt{x}} (\log p)^\eta \le  \frac1{x^{k/2}}
(\log x)^{\eta} \pi(\sqrt{x}),
\end{equation*}
hence
\begin{equation} \label{A_2}
A_1 \ll \frac{(\log x)^{\eta-1}}{x^{(k-1)/2}},
\end{equation}
using $\eta \ge 0$ and the estimate $\pi(\sqrt{x})\ll \frac{\sqrt{x}}{\log x}$.

We deduce by \eqref{S_gen}, \eqref{A}, \eqref{A_A}, \eqref{A_1} and \eqref{A_2} that
\begin{equation} \label{S_with_last_sum}
S_{f_{S,\eta},k}(x)
= V_k x^{k/2} \sum_p (\log p)^{\eta} H_{S,k}(p) + O(\sqrt{x}(\log x)^{\eta-1}) + \sum_{\substack{p^{\nu} \le \sqrt{x}\\ \nu \in S}} 
(\log p)^{\eta} P_k(x/p^{2\nu}).  
\end{equation}

Here the last sum can be estimated by using the known estimates for $P_k(x)$ given in Section 
\ref{Sect_Known_estimates}. For example, if $k=2$, then $P_2(x)\ll x^{\vartheta}$ with $\vartheta:= 517/1648+\varepsilon$. We deduce that
the last sum in \eqref{S_with_last_sum} is, with the notation $m=:\lfloor \frac{\log x}{2\log p}\rfloor$ of above,
\begin{equation*}
\ll \sum_{p^\nu \le \sqrt{x}} (\log p)^{\eta} \Big(\frac{x}{p^{2\nu}}\Big)^{\vartheta} =x^{\vartheta} \sum_{p\le \sqrt{x}} (\log p)^{\eta}
\sum_{\nu \le m}\frac1{p^{2\nu \vartheta}} \le x^{\vartheta} \sum_{p\le \sqrt{x}} \frac{(\log p)^{\eta}}{p^{2\vartheta}}
\end{equation*}
\begin{equation*}
\ll  x^{\vartheta} \frac{(\log x)^{\eta-1}}{x^{(\vartheta-1)/2}} = \sqrt{x} (\log x)^{\eta-1}, 
\end{equation*}
by using \cite[Lemma\, 3.4]{HeyTot2021}. The cases $k=3$, $k=4$ and $k \ge 5$ are similar, and lead to the stated error terms.
\end{proof}

\subsection{Proofs of the results for functions of the LCM}

\begin{proof}[Proof of Theorem {\rm \ref{Th_1_LCM}}] 

Let $f$ be a function in class ${\cal A}_1$. From Lemma \ref{Lemma_HilTot} with $r=1$ we deduce the convolutional identity
\begin{equation*}
f([n_1,\ldots,n_k]) =\sum_{j_1 d_1=n_1,\ldots,j_k d_k=n_k}
j_1\cdots j_k h_{f,k}(d_1,\ldots,d_k).
\end{equation*}

Therefore
\begin{equation*}
S:= \sum_{\substack{n_1,\ldots,n_k\in \N\\ n_1^2+\cdots +n_k^2 \le x}} 
f([n_1,\ldots,n_k]) = 
\sum_{\substack{j_1,d_1,\ldots,j_k,d_k\in \N\\ j_1^2d_1^2+\cdots +j_k^2d_k^2\le x}} 
j_1\cdots j_k h_{f,k}(d_1,\ldots,d_k)
\end{equation*}
\begin{equation*}
= \sum_{1\le d_1,\ldots,d_k \le \sqrt{x}} h_{f,k}(d_1,\ldots,d_k) 
\sum_{\substack{j_1,\ldots,j_k\in \N\\ d_1^2j_1^2+\cdots +d_k^2j_k^2 \le x}} 
j_1\cdots j_k.
\end{equation*}

By Lemma \ref{Lemma_prod_k} we have
\begin{equation*}
S = \sum_{1\le d_1,\ldots,d_k \le \sqrt{x}} h_{f,k}(d_1,\ldots,d_k) \left( \frac{x^k}{2^k k!d_1^2\cdots d_k^2}+ 
O\left( \frac{x^{k-1/2}(d_1+\cdots +d_k)}{d_1^2\cdots d_k^2} \right)
\right)
\end{equation*}
\begin{equation*}
= \frac{x^k}{2^k k!} \sum_{d_1,\ldots,d_k=1}^{\infty} \frac{h_{f,k}(d_1,\ldots,d_k)}{d_1^2\cdots d_k^2} 
+ \left(x^k \sideset{}{'} \sum_{d_1,\ldots,d_k} \frac{|h_{f,k}(d_1,\ldots,d_k)|}{d_1^2\cdots d_k^2} \right) 
\end{equation*}
\begin{equation} \label{second_error}
+ O\left( x^{k-1/2}\sum_{1\le d_1,\ldots,d_k\le \sqrt{x}} \frac{|h_{f,k}(d_1,\ldots,d_k)|(d_1+\cdots+d_k)}{d_1^2\cdots d_k^2}
\right)
\end{equation}
where $\sum^{'}$ means that $d_1,\ldots,d_k\le \sqrt{x}$ does not hold. That is, there exists at least one $m$ ($1\le m\leq k$) 
such that $d_m>\sqrt{x}$. Without loss of generality, we can suppose that $m=1$. We obtain for $0<\varepsilon <1/4$ that
\begin{equation*}
\sideset{}{'}  \sum_{\substack{d_1,\ldots,d_k\\ d_1>\sqrt{x}}} \frac{|h_{f,k}(d_1,\ldots,d_k)|}{d_1^2\cdots d_k^2} 
\le \sideset{}{'} \sum_{\substack{d_1,\ldots,d_k\\ d_1>\sqrt{x}}} \frac{|h_{f,k}(d_1,\ldots,d_k)|}{d_1^2\cdots d_k^2} \left(\frac{d_1}{\sqrt{x}}\right)^{1/2-2\varepsilon} 
\end{equation*}
\begin{equation*}
\le x^{\varepsilon-1/4} \sum_{d_1,\ldots,d_k=1}^{\infty} \frac{|h_{f,k}(d_1,\ldots,d_k)|}{d_1^{3/2+2\varepsilon}d_2^2 
\cdots d_k^2} \ll x^{\varepsilon-1/4}, 
\end{equation*}
since the latter series converges by Lemma \ref{Lemma_HilTot}.

For the error term in \eqref{second_error} by taking $d_1$ in the numerator (similarly for $d_2,\ldots,d_k$),
\begin{equation*}
\sum_{1\le d_1,\ldots,d_k\le \sqrt{x}} \frac{|h_{f,k}(d_1,\ldots,d_k)|d_1}{d_1^2\cdots d_k^2}
= \sum_{1\le d_1,\ldots,d_k\le \sqrt{x}} \frac{|h_{f,k}(d_1,\ldots,d_k)|d_1^{1/2+2\varepsilon}}{d_1^{3/2+2\varepsilon}
d_2^2 \cdots d_k^2}  
\end{equation*}
\begin{equation*}
\le x^{1/4+\varepsilon} \sum_{d_1,\ldots,d_k=1}^{\infty} \frac{|h_{f,k}(d_1,\ldots,d_k)|}{d_1^{3/2+2\varepsilon} 
d_2^2 \cdots d_k^2}  \ll x^{1/4+\varepsilon},
\end{equation*}
the latter series, the same as above, being convergent. This completes the proof.
\end{proof}

\begin{proof} [Proof of Corollary {\rm \ref{Cor_1_LCM}}] If $k=2$, then Theorem \ref{Th_1_LCM} provides the error $O(x^{7/4+\varepsilon})$. 
We show that for $k=2$ and the function $f(n)=n$, the error term is $O(x^{3/2}\log x)$. To this end 
we remark that 
\begin{equation} \label{series_lcm_2}
D(z_1.z_2):= \sum_{n_1,n_2=1}^{\infty} \frac{[n_1,n_2]}{n_1^{z_1}n_2^{z_2}} = \zeta(z_1-1)\zeta(z_2-1)\frac{\zeta(z_1+z_2-1)}{\zeta(z_1+z_2-2)}. 
\end{equation}

To present a short direct proof of this identity, write $n_1=da_1$, $n_2=da_2$ with $(a_1,a_2)=1$. Then $[n_1,n_2]=da_1a_2$ and we 
deduce
\begin{equation*} 
D(z_1,z_2) = \sum_{\substack{d,a_1,a_2=1\\(a_1,a_2)=1}}^{\infty} 
\frac{da_1a_2}{(da_1)^{z_1}(da_2)^{z_2}} =\sum_{d,a_1,a_2=1}^{\infty} 
\frac{da_1a_2}{(da_1)^{z_1}(da_2)^{z_2}} \sum_{\delta \mid (a_1,a_2)} \mu(\delta)
\end{equation*}
and by denoting $a_1=\delta b_1$, $a_2=\delta b_2$,
\begin{equation*} 
D(z_1,z_2)= \sum_{d,\delta,b_1,b_2=1}^{\infty} 
\frac{d\delta^2b_1b_2\mu(\delta)}{(d\delta b_1)^{z_1}(d\delta b_2)^{z_2}} = \sum_{d=1}^{\infty} \frac1{d^{z_1+z_2-1}}
\sum_{\delta=1}^{\infty} \frac{\mu(\delta)}{\delta^{z_1+z_2-2}}\sum_{b_1=1}^{\infty} \frac1{b_1^{z_1-1}} \sum_{b_2=1}^{\infty} \frac1{b_2^{z_2-1}},
\end{equation*}
giving \eqref{series_lcm_2}.

Now consider the functions $h(n)=\sum_{d\mid n} d\mu(d)= \prod_{p\mid n} (1-p)$, and 
\begin{equation} \label{def_h}
h(n_1,n_2)= \begin{cases} nh(n), & \text{ if $n_1=n_2=n$}, \\ 0, & \text{ otherwise},
\end{cases}
\end{equation}
satisfying
\begin{equation*}
\sum_{n_1,n_2=1}^{\infty} \frac{h(n_1,n_2)}{n_1^{z_1}n_2^{z_2}} = \frac{\zeta(z_1+z_2-1)}{\zeta(z_1+z_2-2)}. 
\end{equation*}

This shows that in the above proof of Theorem \ref{Th_1_LCM}, in the case $k=2$ and $f(n)=n$ one has $h_{f,2}(n_1,n_2)=h(n_1,n_2)$, 
defined by \eqref{def_h}. Now, following that proof,
\begin{equation*}
\sum_{d_1,d_2\le \sqrt{x}} \frac{|h(d_1,d_2)|(d_1+d_2)}{d_1^2d_2^2}= 2 \sum_{d\le \sqrt{x}} \frac{|h(d)|}{d^2}\ll \sum_{d\le \sqrt{x}} \frac1{d}
\ll \log x,
\end{equation*}
and 
\begin{equation*}
\sideset{}{'} \sum_{\substack{d_1,d_2\\ d_1>\sqrt{x}}} \frac{|h(d_1,d_2)|}{d_1^2d_2^2} 
= \sum_{d>\sqrt{x}} \frac{|h(d)|}{d^3} \ll \sum_{d>\sqrt{x}} \frac1{d^2} 
\ll \frac1{\sqrt{x}},
\end{equation*}
leading to the stated error term.
\end{proof}

\begin{proof}[Proof of Theorem {\rm \ref{Th_1_LCM__r_zero}}] Similar to the proof of Theorem \ref{Th_1_LCM}. 
Let $f$ be a function in class ${\cal A}_0$. From Lemma \ref{Lemma_HilTot} with $r=0$ we deduce the identity
\begin{equation*}
f([n_1,\ldots,n_k]) =\sum_{j_1 d_1=n_1,\ldots,j_k d_k=n_k} h_{f,k}(d_1,\ldots,d_k).
\end{equation*}

Hence
\begin{equation*}
\sum_{\substack{n_1,\ldots,n_k\in \N\\ n_1^2+\cdots +n_k^2 \le x}} 
f([n_1,\ldots,n_k]) = 
\sum_{\substack{j_1,d_1,\ldots,j_k,d_k\in \N\\ j_1^2d_1^2+\cdots +j_k^2d_k^2\le x}} 
h_{f,k}(d_1,\ldots,d_k)
\end{equation*}
\begin{equation*}
= \sum_{1\le d_1,\ldots,d_k \le \sqrt{x}} h_{f,k}(d_1,\ldots,d_k) 
\sum_{\substack{j_1,\ldots,j_k\in \N\\ d_1^2j_1^2+\cdots +d_k^2j_k^2 \le x}} 1,
\end{equation*}
and use Lemma \ref{Lemma_number_k}.
\end{proof}

\medskip \medskip

\noindent Randell Heyman \\
School of Mathematics and Statistics \\
University of New South Wales \\
Sydney, Australia \\
E-mail: {\tt randell@unsw.edu.au}

\medskip

\noindent L\'aszl\'o T\'oth \\
Department of Mathematics \\
University of P\'ecs \\
Ifj\'us\'ag \'utja 6, 7624 P\'ecs, Hungary \\
E-mail: {\tt ltoth@gamma.ttk.pte.hu}

\end{document}